\theoremstyle{plain}
\newtheorem{theorem}{Theorem}[section]
\newtheorem{lemma}[theorem]{Lemma}
\newtheorem*{lemma*}{Lemma}
\newtheorem{proposition}[theorem]{Proposition}
\newtheorem*{corollary*}{Corollary}
\newtheorem{conjecture}[theorem]{Conjecture}
\newtheorem{claim}[theorem]{Claim}
\newtheorem*{claim*}{Claim}
\newtheorem*{theorem*}{Theorem}
\newtheoremstyle{break}%
{}{}%
{\itshape}{}%
{\bfseries}{}
{\newline}{}
\theoremstyle{break}
\theoremstyle{definition}
\newtheorem{definition}[theorem]{Definition}
\theoremstyle{remark}
\newtheorem{obs*}[theorem]{Observation}
\newtheorem*{remark}{Remark}
\newtheorem{remark*}[theorem]{Remark}
\newcommand{\N}{\mathbb{N}}
\newcommand{\Z}{\mathbb{Z}}
\newcommand{\itembf}[2]{%
\ifthenelse{\isempty{#1}}{\item {\bfseries #2}}{\item[\textbf{#1}]{\bfseries #2}}%
}
\newcommand{\dfn}{\mathrel{\mathop:}=}
\newcommand{\sqed}{\hfill $/\!\!/$}
\newcommand{\scratch}[1]{{\color{red}\textbf{#1}}}
\DeclareMathAlphabet{\mathbbold}{U}{bbold}{m}{n}
\DeclareMathAlphabet{\cmb}{OT1}{cmbr}{m}{n}
\DeclareMathOperator{\diam}{diam}
\DeclareMathOperator{\Mod}{Mod}
\DeclareMathOperator{\PMod}{PMod}
\DeclareMathOperator{\asdim}{asdim}
\newlist{clist}{enumerate}{1}
\setlist[clist]{label=(\roman*),itemsep=0ex,
  topsep=.5ex,parsep=0ex,leftmargin=3.5em}
\newlist{citem}{itemize}{1}
\setlist[citem,1]{label=\textbullet,topsep=1ex,itemsep=1ex,parsep=0ex}
\renewcommand{\scratch}[1]{{}}
\newcommand{\A}{\mathcal A}
\newcommand{\G}{\mathcal G}
\newcommand{\cc}{\mathcal C}
\newcommand{\M}{\mathcal M}
\newcommand{\X}{\mathscr X}
\newcommand{\xc}{{\vphantom{\mathscr X}\smash{\hat{\mathscr X}}}}
\newcommand{\lx}{\mathcal L}
\DeclareMathOperator{\base}{base}
\DeclareMathOperator{\trans}{trans}
\begin{document}

\title{Multiarc and curve graphs are hierarchically 
hyperbolic}
\author{Michael C.\ Kopreski}
\email{kopreski@math.utah.edu}

\begin{abstract}
  A multiarc and curve graph is a simplicial graph whose vertices
  are arc and curve systems on a compact, connected, orientable
  surface $\Sigma$.
  We show that all connected, non-trivial 
  multiarc and curve graphs 
  preserved by the natural action of $\PMod(\Sigma)$ and 
	whose adjacent vertices have bounded geometric intersection
	number are hierarchically hyperbolic spaces with respect to
	witness subsurface projection.  This result extends work of
	Kate Vokes on twist-free multicurve graphs and confirms two
	conjectures of Saul Schleimer in a broad setting.  In addition, we
  prove that the $\PMod(\Sigma)$-equivariant quasi-isometry type of
  such a graph is uniquely classified by its set of 
  connected witness subsurfaces. 
\end{abstract}

\maketitle
\vspace{-1em}

\section{Introduction and main results}

Let $\Sigma$ be a compact, connected,
orientable, non-pants surface possibly with boundary such that
$\chi(\Sigma) \leq -1$. 
 We consider a broad
class of natural combinatorial models built from collections of 
arc and curve systems on $\Sigma$:
\begin{definition}
  A \textit{multiarc and curve graph} $\A$
  on $\Sigma$ is a
  simplicial graph whose vertices are collections of
  disjoint isotopy classes of simple, essential arcs and curves in
  $\Sigma$. 
  $\A$ is a \textit{multicurve graph} if $V(\A)$
consists of only multicurves.
\end{definition}

\begin{definition}\label{def:admis} 
  A connected multiarc and curve graph $\A$ on $\Sigma$ is
  \textit{admissible} if
  \begin{enumerate}[label=(\roman*)]
    \item \label{item:admis:nat_act} $\PMod(\Sigma)$ 
      preserves $V(\A)$ 
      and extends to an action on $\A$, and
    \item if $(a,b)\in E(\A)$, then the geometric
      intersection $i(a,b)$ is uniformly bounded.
  \end{enumerate}
\end{definition}

Recall that a \textit{witness subsurface} for $\A$ is an essential,
non-pants subsurface that meets every vertex of
$\A$.  We prove the following:

\begin{theorem}\label{thm:hhsa}
  Let $\A$ be an admissible multiarc and curve graph on $\Sigma$
  and let
  $\mathscr{X}$ denote its collection of witnesses.
  Then $(\A,\mathscr{X})$ is a hierarchically hyperbolic
  space with respect to subsurface projection to witness curve 
  graphs $\cc W$, $W \in \mathscr{X}$. 
\end{theorem}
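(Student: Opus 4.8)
The plan is to verify the axioms of a hierarchically hyperbolic space, in the formulation of Behrstock--Hagen--Sisto, directly on $\A$, following the blueprint established by Vokes for multicurve graphs and adapting it where arcs and annular witnesses demand new input. I would take the index set to be $\mathscr{X}$, the associated hyperbolic spaces to be the witness curve graphs $\cc W$, and the projections $\pi_W \colon \A \to \cc W$ to be Masur--Minsky subsurface projection; these are defined on every vertex precisely because a witness meets every vertex of $\A$. The three relations on $\mathscr{X}$ are the natural topological ones: nesting $W \sqsubseteq V$ is essential containment, orthogonality $W \perp V$ is disjoint realizability, and transversality $W \pitchfork V$ is everything else. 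The surface $\Sigma$ is itself a witness and is the unique $\sqsubseteq$-maximal element, and for $W \sqsubsetneq V$ or $W \pitchfork V$ the relative projection $\rho^W_V \subseteq \cc V$ is $\pi_V(\partial W)$.

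A large block of the axioms is a feature of subsurface projection on $\Sigma$ alone and is insensitive to the choice of $\A$: $\delta$-hyperbolicity of each $\cc W$ (Masur--Minsky and Klarreich, with the arc-graph analogues for witnesses carrying essential arcs), the Behrstock inequality furnishing consistency, the bounded geodesic image theorem, large links, partial realization, and finite complexity (the length of any $\sqsubseteq$-chain is bounded in terms of $\chi(\Sigma)$). These I would import with little change. The only point requiring care is the container axiom, where I must confirm that the complementary subsurfaces supplying containers are themselves witnesses for $\A$; this I would settle with a short topological characterization of $\mathscr{X}$.

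The first place admissibility does genuine work is the coarse Lipschitz property. If $(a,b) \in E(\A)$ then $i(a,b)$ is uniformly bounded by condition (ii) of Definition~\ref{def:admis}, and the subsurface projection distance $d_{\cc W}(a,b)$ is controlled by a function of $i(a,b)$ for \emph{every} witness $W$ at once, including annular witnesses, where $d_{\cc W}(a,b) \le i(a,b)+O(1)$. Hence all $\pi_W$ are uniformly coarsely Lipschitz. This is exactly where the present hypotheses improve on Vokes: rather than excluding annular witnesses by a twist-free assumption, the bounded-intersection condition tames their twisting directly. Coarse surjectivity of $\pi_W$ onto $\cc W$ I would derive from $\PMod(\Sigma)$-equivariance together with connectedness, which force $V(\A)$ to fill out each witness curve graph. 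I would also record here the finiteness lemma that drives the remaining axiom: since there are finitely many topological types, hence finitely many $\PMod(\Sigma)$-orbits, of pairs $(a,b) \in V(\A)^2$ with $i(a,b) \le C$, and since $d_\A$ is finite (by connectedness) and $\PMod(\Sigma)$-invariant (by equivariance), any two vertices of bounded geometric intersection lie at uniformly bounded $\A$-distance.

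The main obstacle is the Uniqueness axiom: I must produce $\theta(\kappa)$ so that $d_{\cc W}(\pi_W a, \pi_W b) \le \kappa$ for all $W \in \mathscr{X}$ forces $d_\A(a,b) \le \theta(\kappa)$ --- an \emph{upper} bound on $\A$-distance in terms of witness projections only. One cannot route this through geometric intersection number, since $a$ and $b$ may intersect arbitrarily much inside non-witness regions (for instance by twisting about a curve disjoint from some vertex) while remaining close in $\A$. Instead I would prove the distance formula $d_\A(a,b) \asymp \sum_{W \in \mathscr{X}} [\,d_{\cc W}(a,b)\,]_K$, where $[t]_K$ denotes $t$ when $t \ge K$ and $0$ otherwise, by explicitly constructing an $\A$-path whose length is governed by the right-hand side. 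The construction follows a Masur--Minsky hierarchy between markings refining $a$ and $b$, but carried out \emph{relative to the witnesses}: the finiteness lemma converts each elementary, bounded-intersection move of the hierarchy into a bounded-length excursion in $\A$, while the crux is to show that the portions of the hierarchy supported in non-witness subsurfaces cost nothing in $\A$ --- each such subsurface is disjoint from some vertex of $\A$, so the activity there can be absorbed by a bounded detour and is invisible to the witness-restricted sum. Controlling these non-witness contributions, and thereby showing that $\A$-distance is governed precisely by witness projections, is the technical heart of the argument; with Uniqueness in hand, all of the HHS axioms hold and $(\A,\mathscr{X})$ is a hierarchically hyperbolic space.
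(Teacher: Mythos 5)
Your route is genuinely different from the paper's. You verify the HHS axioms directly on $\A$; the paper never runs hierarchy machinery on $\A$ at all. Instead it constructs an associated partial marking graph $\M_\A$ (replacing each system $\alpha$ by the clean markings obtained from boundaries of neighborhoods of its components, with transversals recording the filled subsurfaces), shows the witness sets agree and that $\zeta:\A\to\M_\A$ is a $\PMod(\Sigma)$-equivariant, projection-preserving quasi-isometry, and then proves the marking-graph case by a second reduction to a universal graph $\lx_\X$, on which Axioms 2--8 are inherited from the coarsely dense subcomplex $\mathcal{MC}(\Sigma)^{(1)}$, Axiom 1 follows from Proposition~\ref{prop:pi_lip}, and the uniqueness axiom is obtained by quoting Vokes' bound \cite[Prop.~3.6]{vokes} for the twist-free multicurve graph $\mathcal{K}_{\tilde\X}$ and then running an explicit twist-correction argument to reinstate the annular-witness data along the path. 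Much of what you import is sound: the standard axioms do transfer, containers do land in $\X$ by closure under enlargement (using disconnected witnesses), annular projections of vertices of $\A$ are honestly defined since every vertex crosses the core of an annular witness, and your finiteness lemma is exactly the paper's Lemma~\ref{lem:di_bound}.

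The gap is at your crux. The claim that hierarchy activity in a non-witness domain $Y$ ``can be absorbed by a bounded detour'' because ``$Y$ is disjoint from some vertex $\omega$ of $\A$'' is a non sequitur as stated: disjointness from $Y$ says nothing about where $\omega$ sits in $\A$, and $\omega$ may intersect the current slice of the hierarchy arbitrarily much outside $Y$, so the excursion through $\omega$ has no a priori length bound. To repair this one must translate $\omega$ by a mapping class supported in $\Sigma\setminus Y$ so that its intersection with the slice is uniformly bounded, and --- because the translate must extend by the identity to all of $\Sigma$ to remain in $V(\A)$ --- cofiniteness of the $\PMod$-action on restricted configurations is not enough: this is precisely the content of the paper's Claim~\ref{claim:phi_ikj}, which works in $\Mod(K,\partial K)$ and pays an additional boundary-multitwist correction term, together with Claim~\ref{claim:ikj} to reassemble intersection bounds across the decomposition. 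Separately, your plan needs an argument that the resolution of the hierarchy can be reordered so that all moves in domains nested in a maximal non-witness $Y$ occur in a consecutive block (this uses time-ordering of transverse domains and the fact that non-witnesses are closed under passing to subsurfaces, dual to closure of $\X$ under enlargement, so the whole sub-hierarchy in $Y$ can be excised at once); this is nontrivial and unaddressed. None of this is unfixable --- completed, your argument would amount to reproving Vokes' Proposition~3.6 with annular witnesses built in, which the paper deliberately avoids by factoring through markings --- but the detour bound, which you correctly identify as the technical heart, is asserted rather than proved, and the assertion as literally written is false without the equivariant translation step.
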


\begin{theorem}\label{thm:a_class}
  The $\PMod(\Sigma)$-equivariant quasi-isometry type of an 
  admissible
  multiarc and curve graph on $\Sigma$ is uniquely determined by
  its connected witnesses. 
\end{theorem}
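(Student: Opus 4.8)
The engine of the proof is the Masur--Minsky-style distance formula supplied by the hierarchically hyperbolic structure of Theorem~\ref{thm:hhsa}: for a threshold $K$ large enough, any admissible $\A$ with witnesses $\X$ satisfies
\[
  d_{\A}(x,y) \;\asymp\; \sum_{W \in \X} \big[\, d_{\cc W}(\pi_W(x), \pi_W(y)) \,\big]_K,
\]
where $\pi_W$ denotes subsurface projection and $[t]_K = t$ if $t \ge K$ and $0$ otherwise. The plan is to show, first, that this sum is insensitive to the disconnected witnesses, and second, that the surviving data is intrinsic to the surface rather than to the particular graph, so that two admissible graphs with the same connected witnesses are forced to be $\PMod(\Sigma)$-equivariantly quasi-isometric.

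For the first point, observe that if $W = W_1 \sqcup \dots \sqcup W_k$ is disconnected then any two essential curves supported in distinct components are disjoint, so $\cc W$ is a join and has diameter at most $2$. Choosing $K > 2$ annihilates every term indexed by a disconnected witness, and the distance formula reduces to a sum over the connected witnesses alone. For the second point, note that $\pi_W$ is defined on isotopy classes of arc and curve systems intrinsically — it makes no reference to the ambient graph — and is $\PMod(\Sigma)$-equivariant. Hence, for a fixed connected subsurface $W$, the map $\pi_W$ and its target $\cc W$ are literally the same objects whether a system is regarded as a vertex of $\A_1$ or of $\A_2$.

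Now let $\A_1,\A_2$ be admissible with witness collections $\X_1,\X_2$ and identical sets of connected witnesses. To build the equivariant quasi-isometry I would invoke the realization theorem for hierarchically hyperbolic spaces. A vertex $v \in V(\A_1)$ is itself an arc and curve system, and its own projections $(\pi_W(v))_W$ automatically satisfy the Behrstock consistency inequalities; feeding this tuple into the realization theorem for $(\A_2,\X_2)$ produces a vertex $\phi(v) \in V(\A_2)$ with $d_{\cc W}(\pi_W(\phi(v)), \pi_W(v)) \le D$ for every $W \in \X_2$, with $D$ uniform. Because every ingredient is natural, $\phi$ is coarsely $\PMod(\Sigma)$-equivariant; one can upgrade to genuine equivariance by defining $\phi$ on the finitely many $\PMod(\Sigma)$-orbit representatives of $V(\A_1)$ and extending by the action, using that the stabilizer of a vertex coarsely fixes any point realizing its projections. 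Applying the distance formula for $\A_2$, then the bounded agreement of projections, then the reduction to connected witnesses, and finally the distance formula for $\A_1$ yields
\[
  d_{\A_2}(\phi(x), \phi(y)) \;\asymp\; \sum_{W \text{ conn.}} \big[\, d_{\cc W}(\pi_W(x), \pi_W(y)) \,\big]_K \;\asymp\; d_{\A_1}(x,y),
\]
so $\phi$ is a quasi-isometric embedding, and coarse surjectivity follows symmetrically from the analogous map $\A_2 \to \A_1$.

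I expect the main obstacle to be the construction and control of $\phi$ rather than the distance-formula bookkeeping. One must verify that the projection tuple of a vertex of $\A_1$ is $\kappa$-consistent for the \emph{a priori} different index set $\X_2$ — which is precisely where the equality of connected witnesses, together with the boundedness of the coordinates carried by disconnected domains, does the real work — and that realization can be carried out with a single uniform constant $D$ across all vertices and in a $\PMod(\Sigma)$-equivariant fashion. Confirming that disconnected witnesses genuinely contribute nothing, and in particular that they obstruct neither consistency nor equivariance, is the delicate step; the finiteness of the number of $\PMod(\Sigma)$-orbits of arc and curve systems on $\Sigma$ is what ultimately reduces the equivariance question to finitely many choices.
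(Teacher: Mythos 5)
Your argument is essentially correct, but it takes a genuinely different route from the paper. The paper does not deduce Theorem~\ref{thm:a_class} from the hierarchically hyperbolic machinery at all: it proves a classification at the level of partial marking graphs (Theorem~\ref{thm:qi_marking}) by constructing, for each admissible connected witness set $\xc$, an explicit universal graph $\lx_\xc$ of clean markings, and showing that any admissible marking graph with that connected witness set includes into $\lx_\xc$ as a coarsely $\PMod(\Sigma)$-equivariant quasi-isometry. There, Lipschitzness and coarse surjectivity come from cofiniteness of the $\PMod(\Sigma)$-action on bounded-intersection pairs (Lemmas~\ref{lem:di_bound} and \ref{lem:iota_surject}), and the quasi-retraction $\mu \mapsto E_\mu$ is controlled by a subsurface-by-subsurface intersection-counting argument (Claims~\ref{claim:phi_ikj} and \ref{claim:ikj}, Lemma~\ref{lem:marking_retract}) exploiting that non-witnesses admit vertices disjoint from them; Theorem~\ref{thm:a_class} then follows by funneling $\A$ through the marking graph $\M_\A$ of Section~\ref{sec:multiarc}. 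You instead take Theorem~\ref{thm:hhsa} as a black box and combine the distance formula, the realization theorem, and the uniqueness axiom. This is non-circular, since the paper proves Theorem~\ref{thm:hhsa} without appealing to Theorem~\ref{thm:a_class}, and given Theorem~\ref{thm:hhsa} your route is shorter and more formal: it would apply to any two HHS structures built from intrinsic projections with the same unbounded domains. What it does not buy is the stronger content the paper extracts from the universal construction, namely the bijection of Theorem~\ref{thm:qi_marking} (every admissible connected witness set is realized by some admissible graph), which comes for free from $\lx_\xc$ but is invisible to the distance-formula argument; nor does your argument address the converse direction (distinct connected witness sets yield distinct equivariant quasi-isometry types), which the paper handles with the loxodromic-extension argument at the end of Section~\ref{sec:qi_marking} --- though strictly speaking only the forward direction is asserted in the statement.

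Two wrinkles in your construction of $\phi$ should be patched, and both are benign. First, a vertex $v$ of $\A_1$ may be entirely disjoint from a witness $W \in \X_2 \setminus \X_1$, so its projection tuple has an empty coordinate at $W$; but every such $W$ is disconnected (the connected witnesses agree by hypothesis), and by the paper's convention $\cc W$ is a graph join of diameter at most $2$, so an arbitrary choice of coordinate there is harmless and the consistency inequalities involving $W$ hold automatically via the bounded side. Second, your proposed upgrade to \emph{genuine} equivariance via orbit representatives does not work: the stabilizer of a representative $v_0$ need only \emph{coarsely} fix $\phi(v_0)$, so extending by the action is not well defined. But this is no loss --- the relevant notion here, and the one the paper itself proves (cf.\ Theorem~\ref{thm:qi_marking}), is coarse equivariance, which your realization map already has: $g\phi(v)$ and $\phi(gv)$ both realize the tuple of $gv$ up to uniform error, so the uniqueness axiom pins them uniformly close together.
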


We first show the same for 
\textit{partial marking graphs}, whose vertex
set consists of markings in the sense of \cite{MMII} which are
``locally'' complete and clean, in Section~\ref{sec:hhsm}. 
Admissible partial marking graphs include
the $1$-skeleton $\mathcal{MC}^{(1)}(\Sigma)$ of Masur and Minsky's 
marking complex. Here, Theorem~\ref{thm:a_class} defines
a bijection: any set of
essential, non-pants subsurfaces closed under
enlargement and the action of $\PMod(\Sigma)$ is the witness set for
some admissible partial marking graph. 
In Section~\ref{sec:multiarc}, we
show that any admissible multiarc and curve graph on $\Sigma$ is
$\PMod(\Sigma)$-equivariantly quasi-isometric to an admissible
partial marking graph on $\Sigma$, via a map 
coarsely preserving subsurface projection, whence 
Theorems~\ref{thm:hhsa} and \ref{thm:a_class} follow.

The present work is indebted to the excellent paper
by Kate Vokes on the hierarchical hyperbolicity of 
\textit{twist-free} multicurve graphs \cite{vokes}, from which our
results borrow both inspiration and overall strategy.

\subsection{Background}

Given $\Sigma = \Sigma_g^b$ a compact, connected,
orientable surface with genus
$g$ and $b$ boundary components, let 
$\xi(\Sigma) = 3g + b - 3$ denote the \textit{complexity} of
$\Sigma$, equal to 
the number of components in a maximal multicurve.
For a multiarc and curve graph $\A$ on a compact surface $\Sigma$,
we define witness subsurfaces 
in the usual way:
\begin{definition}
  A compact, essential ($\pi_1$-injective, 
  non-peripheral) subsurface
  $W \subset \Sigma$ is a \textit{witness} for $\A$ if $W
  \not \cong \Sigma_0^3$ and every arc and curve system
  in $V(\A)$ intersects $W$.
\end{definition}

\noindent
Let $\pi_W : \A \to
2^{\cc W} \setminus \{\varnothing\}$ denote subsurface projection
and for $a,b \in V(\A)$, let $d_W(a,b) 
\dfn \diam_{\cc W} (\pi_W(a)\cup\pi_W(b))$.
Note that we do not require witnesses to be connected: 
if $W = V \sqcup V'$, let $\cc W$ be the graph join $\cc V *\cc V'$.

We consider two conjectures of Saul Schleimer \cite[\S 2.3]
{curvenotes} on the geometry of multiarc and curve graphs: the first
characterizes $\delta$-hyperbolicity, and the second proposes a
distance formula in the sense of \cite{MMII}.

\begin{conjecture}[Schleimer]
  \label{conj:witness} 
Let $M \geq 0$ and $\A$ be a 
multiarc and curve graph such that $(a,b)\in E(\A)$ if
and only if $i(a,b) \leq M$. $\A$ 
is $\delta$-hyperbolic if and only if it does not
admit disjoint connected witnesses.
\end{conjecture}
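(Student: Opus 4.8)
The plan is to deduce the dichotomy directly from the hierarchically hyperbolic structure furnished by Theorem~\ref{thm:hhsa}. First observe that the graph $\A$ in the statement, whose edges are exactly the pairs $(a,b)$ with $i(a,b)\le M$, is admissible: condition \ref{item:admis:nat_act} holds because $i(\cdot,\cdot)$ is $\PMod(\Sigma)$-invariant, so the natural action preserves $V(\A)$ and the edge relation, and condition (ii) holds by the defining bound $M$. Assuming $\A$ is connected and non-trivial, Theorem~\ref{thm:hhsa} then makes $(\A,\X)$ a hierarchically hyperbolic space whose domains are the witnesses $\X$, with coordinate projections to the curve graphs $\cc W$. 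The whole argument then rests on the standard HHS criterion for hyperbolicity (as used by Vokes, following Behrstock--Hagen--Sisto): an HHS is $\delta$-hyperbolic if and only if it admits no pair of orthogonal domains $U\perp V$ with both $\cc U$ and $\cc V$ of infinite diameter. I would use the two directions of this criterion after translating ``orthogonal'' and ``infinite diameter'' into the language of witnesses.

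Two preliminary translations are needed. In the HHS structure built for Theorem~\ref{thm:hhsa}, orthogonality of two witnesses is realized exactly by disjointness of the underlying subsurfaces, with nesting corresponding to containment up to enlargement; this is part of the verification of the HHS axioms and I would simply quote it. Second, $\cc W$ has infinite diameter precisely when $W$ is connected: a connected, essential, non-pants subsurface has infinite-diameter curve graph (including the annular case, by unboundedness of the twisting graph), whereas a disconnected witness $W=V\sqcup V'$ has $\cc W=\cc V *\cc V'$, a graph join of diameter at most $2$. With these in hand, ``an orthogonal pair of domains with both coordinate spaces unbounded'' is literally ``a pair of disjoint connected witnesses.''

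For the implication that $\delta$-hyperbolicity forbids disjoint connected witnesses, I would argue the contrapositive concretely rather than black-boxing the criterion: given disjoint connected witnesses $W,V$, the partial-realization and distance-formula components of the HHS structure produce a quasi-isometric embedding of the product $\cc W\times\cc V$ into $\A$, namely the standard product region for the orthogonal pair $\{W,V\}$. Since both factors are unbounded connected graphs, they contain geodesic segments of every length, so the embedded product contains quasi-isometrically embedded grids $[0,n]^2$ of unbounded size; a space admitting such quasi-flats is not $\delta$-hyperbolic, whence $\A$ is not $\delta$-hyperbolic. Conversely, if $\A$ has no two disjoint connected witnesses, then by the translations above no orthogonal pair of domains has both coordinate spaces unbounded, and the HHS hyperbolicity criterion yields that $\A$ is $\delta$-hyperbolic.

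The main obstacle is the backward (hyperbolicity) direction: pinning down the exact HHS hyperbolicity criterion in a form whose hypotheses are verified here. In particular one must ensure the bound on the smaller orthogonal factor is \emph{uniform} over $\X$, which is supplied by the finiteness of $\PMod(\Sigma)$-orbits of subsurface types, and one must confirm that deep $\sqsubseteq$-nested chains of witnesses, which do not obstruct hyperbolicity on their own, cause no trouble. Modulo citing this criterion, the remaining ingredients, namely the two translations and the product-region construction, are routine consequences of the HHS structure already established in Theorem~\ref{thm:hhsa}.
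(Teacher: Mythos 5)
Your route is essentially the paper's: Conjecture~\ref{conj:witness} is deduced from Theorem~\ref{thm:hhsa} together with the asymphoric rank-one criterion of \cite[Cor.~2.15]{quasiflats}, using exactly your two translations --- orthogonality is disjointness of witnesses, and a witness has unbounded coordinate space precisely when it is connected, a disconnected witness $W = V \sqcup V'$ getting the diameter-at-most-$2$ join $\cc V * \cc V'$. The uniformity you worry about in the backward direction is resolved just as you anticipate: the $\PMod(\Sigma)$-action forces $\pi_W(\A)$ to be unbounded for every connected witness, and disconnected witnesses have $\diam \cc W \leq 2$, so $(\A,\X)$ is asymphoric with a uniform constant and rank one implies hyperbolicity; your worry about deep $\sqsubseteq$-chains is indeed vacuous, since the criterion concerns only orthogonal families. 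Your forward direction via product regions and arbitrarily large quasi-grids is a slightly more hands-on version of the paper's appeal to rank $\geq 2$, but it is the same mechanism.

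One genuine misstep: your opening claim that condition \ref{item:admis:nat_act} of Definition~\ref{def:admis} holds automatically because $i(\cdot,\cdot)$ is $\PMod(\Sigma)$-invariant. Invariance of $i$ ensures the action preserves the edge relation only \emph{after} one knows it preserves the vertex set, and in the conjecture's statement $V(\A)$ is an arbitrary collection of arc and curve systems --- nothing forces it to be $\PMod(\Sigma)$-invariant (nor forces $\A$ to be connected). The paper is explicit on this point: Schleimer does not assume a natural action, and the conjecture is confirmed only in the admissible setting, i.e.\ assuming a natural $\PMod(\Sigma)$-action and connectivity. Your argument covers exactly the same cases as the paper's, so you should state these as hypotheses rather than deriving them; with that correction, the proposal matches the paper's proof.
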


\begin{conjecture}[Schleimer]
  \label{conj:mm_dist} 
Let $M \geq 0$ and $\A$ be a 
multiarc and curve graph such that $(a,b)\in E(\A)$ if
and only if $i(a,b) \leq M$, and let $\mathscr{X}$ denote the
collection of witnesses of $\A$. Then there exists $C' = C'(\A)$ 
such that for any $C > C'$, there are constants $K,E \geq 0$ such 
that
\[
  d_\A(a,b) \stackrel{K,E}{=} \sum_{W \in \mathscr X} 
  [d_W(a,b)]_C\:.
\]
\end{conjecture}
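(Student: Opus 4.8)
The plan is to derive the distance formula directly from the hierarchically hyperbolic structure supplied by Theorem~\ref{thm:hhsa}, rather than by reworking the original Masur--Minsky arguments by hand. First I would check that the graph $\A$ in the conjecture is admissible in the sense of Definition~\ref{def:admis}: condition~(i) holds because geometric intersection number is $\PMod(\Sigma)$-invariant, so whenever $V(\A)$ is preserved by the mapping class group the edge relation $i(a,b)\le M$ is preserved as well; and condition~(ii) holds with uniform bound $M$ by the definition of the edge set. Assuming $\A$ is connected and non-trivial, it is therefore admissible, so Theorem~\ref{thm:hhsa} makes $(\A,\mathscr{X})$ a hierarchically hyperbolic space whose index set is the witness collection $\mathscr{X}$, whose hyperbolic coordinate spaces are the witness curve graphs $\cc W$, and whose coordinate projection is subsurface projection $\pi_W$.

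With the HHS structure in hand, the estimate is an instance of the general distance formula for hierarchically hyperbolic spaces (Behrstock--Hagen--Sisto), which produces a threshold $s_0$, depending only on the HHS constants, such that for every $s\ge s_0$ one has
\[
  d_\A(a,b) \stackrel{K,E}{=} \sum_{W\in\mathscr{X}} [\,d_W(a,b)\,]_s
\]
for suitable $K,E\ge 0$. Setting $C'=s_0$ and taking any $C>C'$ gives exactly the claimed bound. The HHS index set is $\mathscr{X}$, so the sum already ranges over the full witness collection; and since the coordinate projection is $\pi_W$, the abstract coordinate distance $d_{\cc W}(\pi_W(a),\pi_W(b))$ agrees with $\diam_{\cc W}(\pi_W(a)\cup\pi_W(b))$ up to the uniform additive error coming from the bounded diameter of each projection set, which is absorbed into $E$.

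Two bookkeeping points deserve care. A disconnected witness $W=V\sqcup V'$ has $\cc W=\cc V * \cc V'$ of diameter at most two, so $[d_W(a,b)]_s=0$ once $s>2$; thus for a large threshold only the connected witnesses contribute nontrivially, consistently with Theorem~\ref{thm:a_class}, while the stated sum over all of $\mathscr{X}$ remains valid verbatim. The real difficulty is entirely front-loaded into Theorem~\ref{thm:hhsa}: verifying that subsurface projection to witness curve graphs satisfies the HHS axioms---the projection, nesting/orthogonality/transversality, and consistency axioms in particular---is where all of the surface-topological work lies. Granting that theorem, the present statement follows formally, and the only residual check is that the index set produced there is genuinely $\mathscr{X}$ equipped with the stated coordinate data, so that the abstract distance formula specializes to the conjectured one.
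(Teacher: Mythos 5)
Your proposal is correct and follows essentially the same route as the paper, which deduces Conjecture~\ref{conj:mm_dist} in the admissible case directly from Theorem~\ref{thm:hhsa} together with the Behrstock--Hagen--Sisto distance formula (Theorem~\ref{thm:hhs_dist}). You also correctly flag the one genuine caveat, matching the paper's own scope: since Schleimer does not assume a natural $\PMod(\Sigma)$ action, admissibility (hence the conclusion) holds only under the added hypotheses that $V(\A)$ is $\PMod(\Sigma)$-invariant and $\A$ is connected.
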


If a multiarc and curve graph $\A$ on $\Sigma$
satisfies \ref{item:admis:nat_act} of Definition~\ref{def:admis},
then we will say that
$\A$ has a \textit{natural} $\PMod(\Sigma)$ action.
We note that Schleimer does not assume a natural action of
$\PMod(\Sigma)$, and indeed there exist interesting complexes
lacking such an action.  
Nonetheless, assuming a natural $\PMod(\Sigma)$ action and the
connectivity of $\A$ (as we will for
the remaining work), we observe that it is equivalent
in the conjectures
above to require only
that $i(a,b)$ be uniformly bounded for $(a,b) \in E(\A)$. In
particular:

\begin{remark*}\label{rmk:add_bdd_edges}
  Let $\A$ be a connected 
  multiarc and curve graph
  with a natural $\PMod(\Sigma)$ action, 
  and suppose $\A'$ is obtained by adding edges between
  vertices $a,b$ such that $i(a,b) < M$.  Then $\A
  \underset{\text{q.i.}}{\cong} \A'$.
\end{remark*}

Rather than approaching Conjectures~\ref{conj:witness} and
\ref{conj:mm_dist} directly, we appeal to a broader geometric
property: if $\A$ is hierarchically hyperbolic with the usual 
witness subsurface projection structure, then
both conjectures follow immediately. 
We will introduce hierarchical hyperbolicity and 
make precise these arguments at the end of the section.

\subsubsection{Low-complexity cases}

Note that $\xi(\Sigma) > 0$ by assumption.
When $\xi(\Sigma) \leq 0$, any multiarc and curve graph on $\Sigma$
is empty or it is finite and $\Sigma$ admits no essential
(non-peripheral) non-pants subsurfaces,
except when $\Sigma \cong \Sigma_{1}^0$.  In the latter case, any
admissible multiarc and curve graph is quasi-isometric to the curve
graph and $\Sigma_1^0$ is the only witness, hence
Theorems~\ref{thm:hhsa} and \ref{thm:a_class} hold 
trivially.  

\subsubsection{Twist-free multicurve graphs}

In the sense above, Conjectures~\ref{conj:witness} and
\ref{conj:mm_dist} were resolved positively for a broad class of
examples by Kate Vokes in \cite{vokes}.

\begin{definition}[Vokes]\label{def:twist-free}
  A multicurve graph $\G$ on $\Sigma$ is \textit{twist-free} if it
  is admissible\footnote{Vokes actually requires a natural action of
    $\Mod(\Sigma)$; however, any subgroup containing
  $\PMod(\Sigma)$ suffices for the arguments in
\cite{vokes}.}
  and admits no annular witnesses.
\end{definition}

\begin{theorem}[Vokes]\label{thm:tf_hhs}
Let $\mathcal G$ be a twist-free multicurve graph.  Then
$(\mathcal{G},\mathscr{X})$ is a hierarchically hyperbolic space
with respect to subsurface projections to witness curve graphs $\cc
W$, $W \in \mathscr{X}$.
\end{theorem}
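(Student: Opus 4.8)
The plan is to equip $\mathcal{G}$ with a hierarchically hyperbolic structure indexed by the witness set $\mathscr{X}$ and then verify the axioms of Behrstock--Hagen--Sisto directly. To each witness $W \in \mathscr{X}$ we associate its curve graph $\cc W$, which is $\delta$-hyperbolic with $\delta$ depending only on $\xi(\Sigma)$ by Masur--Minsky. The three relations on $\mathscr{X}$ are read off from the topology of the subsurfaces: nesting $V \sqsubseteq W$ is containment up to isotopy, orthogonality $V \perp W$ is disjointness, and transversality $V \pitchfork W$ is the remaining case of essential overlap, with the relative projections $\rho^V_W$ and $\rho^W_V$ taken to be the usual subsurface projections of $\partial V$ into $\cc W$ and of $\partial W$ into $\cc V$. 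The projection $\pi_W \colon \mathcal{G} \to 2^{\cc W}$ is the one already fixed in the statement. Finite complexity is immediate, since any $\sqsubseteq$-chain has length at most $\xi(\Sigma)+1$.

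The axioms that are essentially formal consequences of Masur--Minsky theory I would dispatch first. Hyperbolicity of the $\cc W$ and the bounded geodesic image property are exactly Masur--Minsky. The consistency inequalities for transverse and nested pairs are the Behrstock inequality for subsurface projections, and the large links axiom follows from the standard finiteness statement that a fixed pair of curves projects far in only boundedly many subsurfaces. The one place where admissibility enters is coarse Lipschitzness of $\pi_W$: because adjacent vertices $a,b$ of $\mathcal{G}$ have $i(a,b)$ uniformly bounded, their projections to any $\cc W$ lie a uniformly bounded distance apart, so $\pi_W$ does not increase distances by more than a fixed constant along edges.

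The heart of the argument, and where the twist-free hypothesis does real work, is the pair of realization-type axioms: partial realization and the uniqueness axiom. The twist-free condition---no annular witnesses---means there is no Dehn-twist coordinate to track, so a consistent tuple $(b_W)_{W \in \mathscr{X}}$ of target curves carries no more information than the multicurve graph metric can already detect. I would establish partial realization by taking a set of pairwise-orthogonal or nested witnesses together with prescribed targets and building an honest vertex of $V(\mathcal{G})$ whose projection to each prescribed $W$ coarsely hits the target; here the natural $\PMod(\Sigma)$ action lets me transport a single model configuration around its orbit to realize arbitrary data. The uniqueness axiom then asks that two vertices $a,b$ with $d_W(a,b)$ uniformly bounded for every $W$ be uniformly close in $\mathcal{G}$, and this is the direction I expect to be the main obstacle.

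To overcome it I would run the standard hierarchy machine: filter the witnesses by the size of $d_W(a,b)$, use bounded geodesic image and the Behrstock inequalities to linearly order the subsurfaces making a large contribution, and then build a path in $\mathcal{G}$ from $a$ to $b$ by successively surgering the multicurve along this ordered list. The twist-free hypothesis is exactly what guarantees the list consists only of non-annular subsurfaces, so each surgery step is realized by a genuine move in $\mathcal{G}$ rather than a twist the graph cannot see; bounding the total length of this path in terms of $\sum_{W} [d_W(a,b)]_C$ then yields both the uniqueness axiom and, simultaneously, a distance formula of the form asserted in Conjecture~\ref{conj:mm_dist}. Once all the axioms are in hand, $(\mathcal{G},\mathscr{X})$ is a hierarchically hyperbolic space by definition.
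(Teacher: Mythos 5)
A point of orientation first: the paper does not prove this statement at all --- it is quoted with attribution from \cite{vokes} as background, and the paper's own contribution (Theorems~\ref{thm:hhsa} and~\ref{thm:hhsm}) generalizes it by a different mechanism. Your blind attempt is, in outline, a reconstruction of Vokes' original argument: a direct verification of the Behrstock--Hagen--Sisto axioms for $(\mathcal{G},\mathscr{X})$, with Masur--Minsky theory supplying hyperbolicity, Behrstock consistency, bounded geodesic image and large links, admissibility supplying coarse Lipschitzness of the $\pi_W$, and a surgery induction (this is Vokes' Prop.~3.6, which the present paper itself invokes inside the proof of Lemma~\ref{lem:axiom9}) supplying uniqueness. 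The present paper's route for its generalization is genuinely different: it never re-verifies most axioms directly, but builds the universal partial marking graph $\lx_\X$, inherits Axioms 2--8 from $\mathcal{MC}(\Sigma)^{(1)}$ sitting as a coarsely dense (if distorted) subcomplex, checks only Axioms 1 and 9 by hand, and transfers the structure along projection-preserving quasi-isometries. The transfer route buys uniformity across the whole class (arcs, transversal data, annular witnesses); the direct route you sketch is self-contained and is what actually works in the twist-free multicurve case.

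Three soft spots if you intend this as a proof rather than a roadmap. (i) \emph{Partial realization:} transporting a model configuration around its $\PMod(\Sigma)$-orbit is not enough, because the axiom also demands $d_W(x,\rho_W^{V_j})$ bounded for every witness $W$ nesting or transverse to $V_j$; an orbit translate of an arbitrary vertex has no control over these projections. The standard fix is to complete $\bigcup_j(\partial V_j \cup \{p_j\})$ to a pants decomposition and use cofiniteness of the $\PMod(\Sigma)$-action to produce a vertex of $\mathcal{G}$ with \emph{uniformly bounded intersection} with it --- exactly the mechanism the paper isolates in the remark following Lemma~\ref{lem:iota_surject}. (ii) You never address the container requirement in the orthogonality axiom; as the paper's footnote warns, containers such as the complement of $U$ in $W$ are typically disconnected, so $\mathscr{X}$ must include disconnected witnesses with $\cc(V \sqcup V') = \cc V * \cc V'$, and your setup never says so. (iii) Your final step inverts the logic: the distance formula is a \emph{consequence} of hierarchical hyperbolicity (\cite[Thm.~4.5]{hhsii}, quoted here as Theorem~\ref{thm:hhs_dist}); uniqueness needs only the upper bound on $d_{\mathcal{G}}(a,b)$ by $\sum_W [d_W(a,b)]_C$, and claiming the surgery yields the full formula ``simultaneously'' is both unnecessary and unproven as stated. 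Relatedly, ``twist-freeness guarantees the list consists only of non-annular subsurfaces'' is misplaced --- the index set contains only witnesses by definition; the real role of twist-freeness is that no twisting data must be matched when surgering from $a$ to $b$, i.e.\ the transversal bookkeeping that the general case requires (carried out in Lemma~\ref{lem:axiom9} via the corrections $\tau_a^{\sigma_j}$) is vacuous.
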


\noindent
Theorem~\ref{thm:a_class} and in fact a 
bijection analogous to Theorem~\ref{thm:qi_marking} below 
follow easily from Vokes'
results in the special case of twist-free multicurve graphs.

The existence of annular witnesses appears related to
non-hyperbolicity, or equivalently by 
Conjecture~\ref{conj:witness}, the existence of disjoint connected
witnesses.  In particular, if $\xi(\Sigma) > 1$ and 
a multiarc and curve graph $\A$ on $\Sigma$
admits an annular witness $A$, then $\A$ admits a disjoint pair of
connected witnesses, namely $A$ and a complementary component.
As a partial converse, if $\A$ is an arc and curve graph that
admits two disjoint witnesses $W, W'$ separated by a 
unique complementary component, then $\A$ admits an
annular witness.
For example, given $\Gamma$ a relation on
$\pi_0(\partial\Sigma)$ the \textit{$\Gamma$-prescribed arc
graph} $\A(\Sigma,\Gamma)$, 
defined by the author in \cite{pac}, is the full subcomplex of
$\A(\Sigma)$ spanned by arcs between boundary components in 
$\Gamma$.  By passing to projections to $\cc\Sigma$,
$\A(\Sigma,\Gamma)$ may be viewed as an admissible multicurve graph;
however, whenever $\Gamma$ is
bipartite (in particular, whenever $\A(\Sigma,\Gamma)$ is
non-hyperbolic), it may be seen that 
$\A(\Sigma,\Gamma)$ admits $g - 1$ pairwise disjoint annular 
witnesses where $g$ is the genus of $\Sigma$, hence 
Theorem~\ref{thm:tf_hhs} does not apply.

Nonetheless, 
from Theorem~\ref{thm:hhsa} it follows immediately that
$\A(\Sigma,\Gamma)$ is hierarchically hyperbolic with respect to the
usual witness subsurface projection structure.  
As an immediately application, 
let $\Omega$ be an orientable surface of infinite topological type.
We recall the \textit{grand arc graph} $\mathcal{G}(\Omega)$ defined
in \cite{grand}, whose vertices are arcs between ends
of distinct maximal type, in the sense of the partial order given in
\cite{mannrafi}.  Suppose that $\Omega$ has exactly 
$n$ distinct maximal end classes.  Then for an arbitrarily large
witness $W \subset \Omega$ there exists a
$n$-partite relation $\Gamma_W$ on $\pi_0(\partial W)$
such that, by extending arcs,
$\A(W,\Gamma_W)$ quasi-isometrically embeds in 
$\mathcal{G}(\Omega)$.  Hence choosing an exhaustion of $\Omega$ by
connected witnesses, we have the following:

\begin{proposition}\label{prop:ga_rank}
  Suppose that $\Omega$ has exactly $2$ distinct maximal end
  classes and infinite genus.  
  There exists an asymphoric 
  hierarchically hyperbolic space of arbitrarily large rank 
  (see below)
  that quasi-isometrically embeds into $\mathcal{G}(\Omega)$. \qed
\end{proposition}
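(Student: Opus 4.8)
The plan is to produce, from an exhaustion of $\Omega$, a sequence of bipartite $\Gamma$-prescribed arc graphs whose ranks grow without bound, and to invoke the quasi-isometric embeddings already supplied by the preceding discussion. Since $\Omega$ has exactly two maximal end classes, each relation $\Gamma_W$ constructed above is $2$-partite, i.e.\ bipartite; and since $\Omega$ has infinite genus, I can choose an exhaustion by connected witnesses $W_1 \subset W_2 \subset \cdots$ whose genera $g_k$ tend to infinity. For each $k$, viewing $\A(W_k,\Gamma_{W_k})$ as an admissible multicurve graph via projection to $\cc W_k$, Theorem~\ref{thm:hhsa} makes $(\A(W_k,\Gamma_{W_k}),\X_k)$ a hierarchically hyperbolic space with respect to witness subsurface projection.

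The core of the argument is to bound the rank from below. By the observation preceding the proposition, bipartiteness of $\Gamma_{W_k}$ yields a family $A_1,\dots,A_{g_k-1} \in \X_k$ of $g_k-1$ pairwise disjoint annular witnesses. In the witness subsurface projection structure, disjoint witnesses are pairwise orthogonal domains, and each annular coordinate space $\cc A_i$ is quasi-isometric to a line, hence unbounded; so these $g_k-1$ domains form a pairwise-orthogonal collection with unbounded associated hyperbolic spaces. It follows that the rank of $\A(W_k,\Gamma_{W_k})$ is at least $g_k-1$; moreover the standard product region spanned by the $A_i$ is quasi-isometric to $\Z^{g_k-1}$ (its factors realized by Dehn twists about the $A_i$), so this rank is witnessed by a genuine top-dimensional quasi-flat, which is the content of asymphoricity as defined below. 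As $g_k \to \infty$, the rank is unbounded. Finally, the arc-extension map of the preceding discussion furnishes for each $k$ a quasi-isometric embedding $\A(W_k,\Gamma_{W_k}) \hookrightarrow \mathcal G(\Omega)$, and the sequence $\{\A(W_k,\Gamma_{W_k})\}$ completes the construction.

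I expect the main obstacle to be the step asserted as ``it may be seen'' above: verifying that bipartiteness of $\Gamma_{W_k}$ genuinely produces $g_k-1$ \emph{pairwise disjoint} annular witnesses, and that each is an unbounded, orthogonal domain in the hierarchical structure of Theorem~\ref{thm:hhsa} rather than being collapsed or absorbed into a larger witness. Once this is in hand, the unbounded growth of the genus along the exhaustion and the quasi-isometric embedding into $\mathcal G(\Omega)$ are immediate from the setup, and asymphoricity reduces to exhibiting the $\Z^{g_k-1}$ quasi-flat just described.
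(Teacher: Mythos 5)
Your proposal follows the paper's own (inline) argument essentially verbatim: an exhaustion of $\Omega$ by connected witnesses of unbounded genus, the bipartite relation $\Gamma_W$ forced by the two maximal end classes, the $g-1$ pairwise disjoint annular witnesses of $\A(W,\Gamma_W)$, orthogonality-as-disjointness plus unboundedness of the annular coordinate spaces to bound the rank below by $g-1$, and the arc-extension quasi-isometric embedding into $\mathcal{G}(\Omega)$. This is precisely how the paper justifies the proposition's immediate \qed, and like the paper you defer the ``it may be seen'' verification that bipartiteness of $\Gamma_W$ yields the disjoint annular witnesses; since the paper itself asserts this without proof, flagging it as the main obstacle rather than proving it is consistent with the intended level of detail. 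Your observation that annular projections are unbounded is also correct and is justified in the paper by the remark that a Dehn twist orbit projects $1$-coarsely densely to $\cc A$.

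The one genuine slip is your treatment of asymphoricity. As defined in the paper, asymphoricity is not ``the rank is witnessed by a genuine top-dimensional quasi-flat'': it is the requirement that there exist $C \geq 0$ such that the maximum cardinality of a pairwise orthogonal family $\{U_j\}$ with $\diam \cc U_j > C$ agrees with the rank $\nu$ computed via unbounded images $\pi_{U_j}(X)$. Exhibiting a $\Z^{g_k-1}$ quasi-flat spanned by twists about the $A_i$ only reproduces the rank lower bound you already have from the orthogonal annuli; it says nothing about the other count, namely that there is no larger orthogonal family of witnesses whose coordinate spaces have large but bounded diameter and bounded projection image. The paper instead obtains asymphoricity from the dichotomy stated just before the proposition: for an admissible multiarc and curve graph the $\PMod(\Sigma)$-action forces $\pi_W(\A)$ to be unbounded unless $W$ is disconnected, in which case $\cc W$ is a join and has diameter at most $2$, so taking $C = 2$ the two counts coincide. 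Replacing your quasi-flat sentence with an appeal to that dichotomy closes the gap; the rest of your argument stands as written.
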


\noindent
More generally, the proposition also holds whenever $\Omega$ has
infinitely many non-maximal ends.
By \cite[Thm.~1.15]{quasiflats}, 
it follows that $\asdim \mathcal{G}(\Omega) = \infty$
whenever $\Omega$ has exactly two maximal end classes and infinite
genus or infinitely many non-maximal ends; we note this
fact may be likewise shown by explicitly constructing quasi-flats.

\subsubsection{Hierarchical hyperbolicity}

Behrstock, Hagen, and Sisto introduced hierarchically
hyperbolic spaces in \cite{hhsi} to provide a common geometric 
framework within which to study mapping class groups and cubical 
groups.  In effect, hierarchical hyperbolicity provides an
axiomatization and generalization of the geometric 
structure of the mapping class groups of finite type surfaces
elucidated in
\cite{MMII}; we direct the reader to \cite{hhsii} for a full
enumeration of the axioms, and to \cite{sisto} for a non-technical 
discussion of hierarchical hyperbolicity and a survey of 
results.

We provide a brief description.  Let $X$ be a quasi-geodesic 
space.  A hierarchically hyperbolic structure $(X,\mathcal{G})$ on
$X$ is comprised of the following:

\begin{itemize}
  \item an index set $\mathcal{G}$ and a collection of uniformly
$\delta$-hyperbolic spaces $\{\cc W : W \in \mathcal{G}\}$ with
associated projections $\pi_W : X \to 2^{\cc W} \setminus
\{\varnothing\}$;
\item relations $\sqsubseteq$ (\textit{nesting}) and $\perp$
(\textit{orthogonality}) on $\mathcal{G}$;
\item if $U \sqsubseteq V$ then a map $\rho^V_U : \cc V \to 2^{\cc
  U}$ and if additionally $U \neq V$, 
  a set $\rho_V^U \subset \cc V$.  If not $U
  \perp V$ and $U, V$ are not $\sqsubseteq$-comparable, 
  then a set $\rho_V^U \subset \cc V$ and
  \textit{vice versa}.
\end{itemize}

\noindent
In addition, the above must satisfy nine axioms for
$(X,\mathcal{G})$ to be hierarchically hyperbolic.  We
consider the model example: the marking complex
$\mathcal{MC}(\Sigma)$ defined in \cite{MMII},
on which $\Mod(\Sigma)$ acts geometrically,
is hierarchically hyperbolic with respect to the index set
$\mathcal{G} = \{\text{essential subsurfaces}\}$ and
subsurface projections $\pi_W$, with nesting and orthogonality
determined by inclusion and disjointness, respectively; $\rho_U^V$
is determined by subsurface projection, or the 
projection of boundary components, as appropriate
\cite[Thm.~11.1]{hhsii}. 

By defining a hierarchically hyperbolic structure analogously for
partial marking graphs, many of the axioms follow from the
fact that they hold for $\mathcal{MC}(\Sigma)$.  We omit these here
and enumerate two that we will check explicitly:
\begin{itemize}
  \item[\textbf{Ax.\ 1:}] \textit{(Projections.)} For $W \in
    \mathcal{G}$, $\pi_W$ must be uniformly coarsely Lipschitz and
    have uniformly quasi-convex image. 
  \item[\textbf{Ax.\ 9:}] \textit{(Uniqueness.)} For every 
    $K \geq 0$, there exists $K'\geq 0$ 
    such that if $u,v \in X$ such that $d_W(u,v)
    \leq K$ for all $W \in \mathcal{G}$, then $d(u,v) \leq K'$.
\end{itemize}

The \textit{rank} $\nu$ of a hierarchically hyperbolic space 
$(X,\mathcal{G})$ is the largest 
cardinality of a collection of pairwise orthogonal
elements $\{U_j\}$ in $\mathcal{G}$ for which $\pi_{U_j}(X)$ is
unbounded; if $(X,\mathcal{G})$ is \textit{asymphoric}, then there
exists $C \geq 0$ such that $\nu$ is likewise the maximum
cardinality of a pairwise orthogonal set for which
$\diam \cc U_j > C$.  $\delta$-hyperbolic spaces are hierarchically 
hyperbolic with respect to a trivial (hence rank at most $1$) 
hierarchical structure; conversely, any rank $1$ hierarchically
hyperbolic space is $\delta$-hyperbolic if it is also asymphoric
\cite[Cor.~2.15]{quasiflats}.  

For $\A$ an admissible multiarc and curve graph and 
a witness $W$, the action of $\PMod(\Sigma)$ implies 
$\pi_W(\A)$ is either unbounded or $\cc W$ is diameter at most
$2$, if $W$ is disconnected.  Hence Theorem~\ref{thm:hhsa} implies
that admissible multiarc and curve graphs are asymphoric
hyperbolically 
hierarchical spaces, and the above and that orthogonality is
equivalent to disjointness  implies that
Conjecture~\ref{conj:witness} holds. 

Let $d_W(a,b) \dfn \diam_{\cc W}(\pi_W(a) \cup \pi_W(b))$.
As with $\mathcal{MC}(\Sigma)$, hierarchically hyperbolic spaces
have a distance formula \cite[Thm.~4.5]{hhsii}:
\begin{theorem}[Behrstock-Hagen-Sisto]\label{thm:hhs_dist}
  Let $(X,\mathcal{G})$ be hierarchically hyperbolic.  Then there
  exists $C'$ such that for any $C > C'$, there exist $K,E
  \geq 0$ such that
  \[
    d(a,b) \stackrel{K,E}{=} \sum_{W \in \mathcal{G}}[d_W(a,b)]_C\:.
  \]
\end{theorem}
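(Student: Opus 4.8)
The plan is to prove the two coarse inequalities separately, following the template of the Masur--Minsky distance formula \cite{MMII} but phrased entirely in terms of the axioms. Throughout write $[t]_C = t$ if $t \geq C$ and $[t]_C = 0$ otherwise, and fix $a,b \in X$. The lower bound $\sum_{W \in \mathcal{G}}[d_W(a,b)]_C \lesssim d(a,b)$ is the softer half. Since each $\pi_W$ is uniformly coarsely Lipschitz (Axiom~1), every individual term satisfies $d_W(a,b) \lesssim d(a,b)$; the real content is that the \emph{sum} stays linear. For this I would fix a discrete geodesic $a = p_0, p_1, \dots, p_n = b$ with $n \asymp d(a,b)$ and argue that only boundedly many domains are ``active'' over any single step. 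Coarse Lipschitzness bounds each $d_W(p_i,p_{i+1})$, while the consistency (Behrstock) inequality together with bounded geodesic image forces the set of domains $W$ with $d_W(a,b) > C$ to have controlled cardinality and bounded multiplicity along the geodesic; summing the per-step contributions then yields $\sum_W [d_W(a,b)]_C \lesssim n$. The combinatorial heart here is a passing-up argument: if too many pairwise transverse domains carried large projection, then a $\sqsubseteq$-larger domain would also carry large projection, bounding the count by induction on complexity.

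The upper bound $d(a,b) \lesssim \sum_{W}[d_W(a,b)]_C$ is the main obstacle, and I would prove it by induction on the complexity of $(X,\mathcal{G})$, i.e.\ the length of the longest $\sqsubseteq$-chain. Choosing a $\sqsubseteq$-maximal domain $S$, take a geodesic in the hyperbolic space $\cc S$ from $\pi_S(a)$ to $\pi_S(b)$ and, via the realization theorem, promote it to a coarse sequence $a = q_0, \dots, q_m = b$ in $X$ whose $S$-projections track this geodesic, with $m \lesssim d_S(a,b)$. Bounded geodesic image guarantees that any domain $W$ whose relative projection $\rho^S_W$ is not encountered by the $\cc S$-geodesic satisfies $d_W(q_j,q_{j+1}) \lesssim C$, so such domains contribute nothing past the threshold. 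The remaining domains are properly nested in the finitely many subsurfaces crossed by the geodesic---their number controlled by the large links axiom---so the inductive hypothesis applies to each consecutive pair $q_j,q_{j+1}$ at strictly smaller complexity, and assembling these estimates bounds $d(a,b)$ by the full sum.

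The delicate points all live in this second half: one must check that the thresholds interact correctly (whence the quantifier order ``there exists $C'$ such that for all $C > C'$''), that realization introduces only uniformly bounded error at each inductive stage, and that the additive constants do not accumulate with complexity, so that $K,E$ depend only on $(X,\mathcal{G})$. An equivalent packaging would first establish uniform-quality \emph{hierarchy paths}---quasi-geodesics that are coarsely monotone in every $\cc W$---and read the formula off their length; but constructing these rests on the same realization-plus-induction machinery, so it does not sidestep the core difficulty. Since this is the general distance formula of Behrstock--Hagen--Sisto \cite[Thm.~4.5]{hhsii}, in the body we simply invoke it once hierarchical hyperbolicity is established, the sketch above indicating the argument one would reproduce directly from the axioms.
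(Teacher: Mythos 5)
The paper gives no proof of this statement: it is imported verbatim as \cite[Thm.~4.5]{hhsii}, which is exactly what your final paragraph does, so your approach coincides with the paper's. Your sketch of the Behrstock--Hagen--Sisto argument (lower bound via coarsely Lipschitz projections plus a passing-up counting argument, upper bound by induction on $\sqsubseteq$-complexity using realization and bounded geodesic image) is a fair outline of the proof in \cite{hhsii}, but there is nothing in the paper itself to compare it against.
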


\noindent
Conjecture~\ref{conj:mm_dist} follows in the admissible case
from Theorem~\ref{thm:hhs_dist} and Theorem~\ref{thm:hhsa}. 
\phantom{.}\sqed

\section{Hierarchical hyperbolicity of partial marking
graphs}\label{sec:hhsm}

We begin with a minor generalization of the complete, clean
markings defined in \cite{MMII}.
\begin{definition}\label{def:marking}
  A \textit{marking}
  $\mu = \{(a_i,t_i)\}$ on $\Sigma$ is an essential
  simple multicurve $\{a_i\}$, denoted $\base \mu$, along with a
  collection of (possibly empty) diameter $1$ subsets $t_i \subset
  \cc (a_i)$, denoted $\trans \mu$; for $a_i \in \base \mu$, let
  $\trans(\mu, a_i) = t_i$ denote the associated 
  \textit{transversal.}
\end{definition}

\noindent
We say
$b$ is a \textit{clean transverse curve} for a curve $a$ 
if the subsurface $F$ filled by $a \cup b$ has
complexity $\xi = 1$ and $a,b$ are adjacent in $\cc F$.
\begin{definition}
  A marking $\mu$ is \textit{locally complete} if
  whenever $t_i \neq
  \varnothing$ then the complementary component of $\base \mu
  \setminus \{a_i\}$ containing $a_i$ has complexity $\xi = 1$; if
  in addition $t_i \neq
  \varnothing$ implies 
  $t_i = \pi_{a_i} b_i$ for some clean transverse curve
  $b_i$ intersecting $\base \mu$ only in $a_i$, 
  then $\mu$ is \textit{(locally) clean}.
\end{definition}

\begin{remark*}\label{rmk:loc_clean} 
  A locally clean marking is exactly one whose restriction to the
  maximal subsurface intersecting only components with non-empty
  transversals is a complete, 
  clean marking on that subsurface, 
  in the original sense of \cite{MMII}.  When we say a
  marking is
  \textit{clean}, we will always mean locally clean.
\end{remark*}

Just as with complete markings, given a locally complete marking
$\mu$, a
locally clean marking $\mu'$ is \textit{compatible} with $\mu$ if
$\base \mu = \base \mu'$ and, for all $a \in \base \mu$,
$\trans(\mu',a) = \varnothing$ if and only if $\trans(\mu,a) =
\varnothing$ and
$d_a(\trans(\mu,a),\trans(\mu',a))$ is minimal among all possible
choices of transversal. 
We have the following from Remark~\ref{rmk:loc_clean} and
\cite[Lem.\ 2.4]{MMII}:

\begin{lemma}\label{lem:marking_compat}
  For any 
  locally complete marking $\mu$, there exist at least one and at
  most $4^b$ compatible clean markings $\mu'$, 
  where $b$ is the number of 
  components in $\mu$ with non-empty transversal.  Furthermore, for
  any such $\mu'$ and $a \in
  \base \mu$, $d_a(\trans(\mu,a),\trans(\mu',a)) \leq 3$.
  \qed
\end{lemma}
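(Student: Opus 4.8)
The plan is to reduce to the complete, clean case of \cite[Lem.\ 2.4]{MMII} by localizing the cleaning procedure to the complexity-$1$ pieces that carry the transversals. Write $S = \{a_1,\dots,a_b\} \subset \base\mu$ for the base curves with non-empty transversal, and for each $a_i \in S$ let $F_i$ be the complementary component of $\base\mu \setminus \{a_i\}$ containing $a_i$, so that $\xi(F_i) = 1$ by local completeness. Since $\intr F_i$ is disjoint from $\base\mu \setminus \{a_i\}$, each $F_i$ contains no base curve other than $a_i$; consequently any clean transverse curve $b_i$ for $a_i$ supported in $F_i$ meets $\base\mu$ only in $a_i$, and the cleaning problems in distinct pieces $F_i, F_j$ are mutually unconstrained.

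First I would make precise the correspondence furnished by Remark~\ref{rmk:loc_clean}. Let $F$ be the maximal subsurface meeting only transversal-bearing components of $\base\mu$; then $\mu$ restricts to a complete marking $\mu|_F$ on $F$ with base $S$. A locally clean marking $\mu'$ compatible with $\mu$ satisfies $\base\mu' = \base\mu$ and has empty transversals exactly on $\base\mu \setminus S$, so it is determined by its restriction to $F$, which is a complete clean marking compatible with $\mu|_F$; conversely any such marking on $F$ extends by the transversal-free components of $\base\mu \setminus S$ to a compatible locally clean marking on $\Sigma$. This yields a bijection between compatible locally clean $\mu'$ and compatible complete clean markings of $\mu|_F$, and by the isolation of each $a_i$ in $\intr F_i$ the latter factor as independent choices over the pieces $F_i$.

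I would then invoke \cite[Lem.\ 2.4]{MMII} on each $F_i$, a surface of complexity $1$ (so $\Sigma_1^1$ or $\Sigma_0^4$), in which the clean transverse curves for $a_i$ are exactly its neighbours in the Farey graph $\cc F_i$. For the transversal $t_i = \trans(\mu,a_i)$ that lemma supplies at least one, and at most four, distance-minimizing clean transversals $\pi_{a_i} b_i \subset \cc(a_i)$, and bounds the minimal distance by $d_{a_i}(t_i,\pi_{a_i} b_i) \le 3$. Existence in every piece gives at least one compatible $\mu'$; the product of the per-piece counts bounds the total number of compatible clean markings by $4^b$; and the per-piece distance bound yields $d_a(\trans(\mu,a),\trans(\mu',a)) \le 3$ for every $a \in \base\mu$ (trivially so when $\trans(\mu,a) = \varnothing$).

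The substance of the argument lies not in the numerology --- which is imported wholesale from \cite[Lem.\ 2.4]{MMII} --- but in the faithfulness of the localization: one must check that the isolation of $a_i$ inside $\intr F_i$ genuinely decouples the cleaning problems, so that compatible clean markings correspond bijectively to independent tuples of per-piece choices and the counts multiply to $4^b$ rather than merely being bounded by it. This is precisely where Remark~\ref{rmk:loc_clean} does the work, and I expect it to be the only step requiring care; granting it, both the existence and the quantitative bounds follow componentwise from \cite[Lem.\ 2.4]{MMII}.
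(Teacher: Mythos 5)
Your proposal is correct and follows essentially the same route as the paper, whose entire proof is the cited reduction: by the remark preceding the lemma, a compatible locally clean marking is determined by a compatible complete clean marking on the maximal subsurface carrying the transversal-bearing base curves, and \cite[Lem.~2.4]{MMII} then supplies the existence, the count, and the bound $d_a(\trans(\mu,a),\trans(\mu',a)) \leq 3$. Your extra step of decoupling the cleaning problem over the complexity-$1$ pieces $F_i$ is sound (cleanliness is a per-component condition, so the choices are independent even though the $F_i$ need not be disjoint) and merely makes explicit the per-base-curve structure already internal to the Masur--Minsky lemma.
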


\begin{definition}\label{def:geom_int}
  Let $\mu = \{(a_j,t_j)\}, \nu=\{(b_k,s_k)\}$ be two markings on
  $\Sigma$.  Then their \textit{geometric
  intersection number} $i(\mu,\nu)$ is defined as follows:
  \begin{align*}
    i(\mu,\nu) &\dfn i(\base \mu,\base \nu)  \\
                 & \qquad + \sum_j 
                 \diam_{\cc(a_j)}\left[\pi_{a_j} (\base \nu) 
                 \cup t_j\right] 
                 + \sum_k 
                 \diam_{\cc(b_k)}\left[\pi_{b_k} (\base \mu) 
                  \cup s_k\right] \\
                 & \qquad + \sum_{a_j = b_k} \diam_{\cc(a_j)}
                  (t_j \cup s_k)
  \end{align*}
\end{definition}

\begin{definition}\label{def:marking_graph}
  A \textit{partial marking graph} $\mathcal{M}$ on $\Sigma$ 
  is a simplicial
  graph whose vertices are clean markings.  If in addition
 \begin{enumerate}[label=(\roman*)]
   \item $\mathcal M$ is connected;
   \item has a natural $\PMod(\Sigma)$ action; and
   \item there exists $L \geq 0$ such that 
     $(\mu,\nu) \in E(\mathcal{M})$ only if $i(\mu,\nu) \leq L$,
 \end{enumerate} 
 then $\mathcal{M}$ is \textit{admissible.}
\end{definition}

We note that admissible multicurve graphs are likewise admissible
partial marking graphs; we need only endow each multicurve with
empty transversals.  The $1$-skeleton of the 
marking complex $\mathcal{MC}(\Sigma)$ in \cite{MMII} is 
an admissible partial marking graph: its vertex set consists of
complete clean markings.

\begin{definition}\label{def:marking_witness}
Let $\M$ be a partial marking graph on $\Sigma$.  
An essential subsurface $W \subset \Sigma$ is a \textit{witness} for
$\M$ if $W \not\cong \Sigma_0^3$ and for every marking $\mu \in
V(\mathcal{M})$ either 
\begin{enumerate*}[label=(\roman*)]
  \item $W$ intersects $\base \mu$ or 
  \item $W$ has an annular component whose core is a component in 
$\base \mu$ with non-empty transversal.
\end{enumerate*}
\end{definition}

Subsurface projection extends to markings, as defined in
\cite{MMII}:

\begin{definition}[Subsurface projection]\label{def:marking_proj}
  Let $\M$ be a partial marking graph on $\Sigma$, and let $S\subset
  \Sigma$ be a connected essential subsurface. 
  We define $\pi_S:\M \to 2^{\cc S}$ as
  follows: for $\mu \in V(\M)$,
  if $S$ is an annulus with core $\alpha \in \base \mu$, then
  $\pi_S(\mu) \dfn \trans(\mu,\alpha)$.
  Otherwise, $\pi_S(\mu) \dfn \pi_S(\base \mu)$ is the usual
  multicurve subsurface projection.  For $S$ the disjoint 
  union of connected essential subsurfaces $S_j$, 
  let $\pi_S(\mu) \dfn \bigcup_j \pi_{S_j}(\mu) \subset \cc S$. 
\end{definition}

\begin{remark*}\label{rmk:coarse_ss_proj}
  We note that $\pi_W(\mu) \neq \varnothing$ for all $\mu \in V(\M)$
  if and only if $W$ is a witness for $\M$.
  $\diam_{\cc W}(\pi_W(\mu)) \leq 2$ by \textit{e.g.}\
  \cite[Lem.~2.3]{MMII}.  For convenience, we shall denote
  $d_W(\mu,\nu) \dfn
  \diam_{\cc W}(\pi_W(\mu)\cup\pi_W(\nu))$.
\end{remark*}

\begin{theorem}\label{thm:hhsm}
  Let $\M$ be an admissible partial marking graph, and let 
  $\mathscr{X}$ denote its collection of witnesses.
  Then $(\M,\mathscr{X})$ is a hierarchically hyperbolic
  space with respect to subsurface projection to witness curve 
  graphs
  $\cc W$, $W \in \mathscr{X}$.
\end{theorem}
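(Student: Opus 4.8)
The plan is to equip $\M$ with the hierarchically hyperbolic structure whose index set is the witness collection $\mathscr{X}$, ordered by nesting $\sqsubseteq$ (inclusion of essential subsurfaces) and orthogonality $\perp$ (disjointness), with uniformly hyperbolic spaces the witness curve graphs $\cc W$, projections $\pi_W$ as in Definition~\ref{def:marking_proj}, and $\rho$-maps given by subsurface and boundary projection exactly as in the $\mathcal{MC}(\Sigma)$ structure of \cite{hhsii}. The enabling structural observation is that $\mathscr{X}$ is closed under enlargement and, more strongly, any essential non-pants subsurface containing a witness is itself a witness, since it meets every marking that the smaller witness does. Thus $\mathscr{X}$ is an upward-closed subcollection of the full subsurface poset, the relations, $\rho$-maps, and hyperbolic spaces are literal restrictions of those for $\mathcal{MC}(\Sigma)$, and the containers produced by the marking-complex structure already lie in $\mathscr{X}$ (a container is a complement inside a witness that contains some orthogonal witness, hence is a witness). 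Consequently the purely combinatorial axioms --- nesting, orthogonality with containers, consistency, finite complexity, large links, and bounded geodesic image --- hold for $(\M,\mathscr{X})$ because they hold for $\mathcal{MC}(\Sigma)$ and restrict to a subcollection of domains, and partial realization follows once one uses the natural $\PMod(\Sigma)$-action to choose the realizing marking inside $V(\M)$. This leaves Axioms~1 and~9.

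For Axiom~1 I would first show each $\pi_W$ is uniformly coarsely Lipschitz. As the projection diameter $\diam_{\cc W}\pi_W(\mu)$ is uniformly bounded, it suffices to bound $d_W(\mu,\nu)$ for $\M$-adjacent $\mu,\nu$; by admissibility $i(\mu,\nu)\le L$, and the standard estimate of subsurface projection in terms of geometric intersection number \cite{MMII} bounds $d_W(\mu,\nu)$ uniformly over all witnesses $W$. For quasi-convexity of the image I would use the natural action directly: fixing $\mu_0\in V(\M)$ and applying mapping classes supported on $W$, which preserve $V(\M)$, moves $\pi_W(\mu_0)$ to within bounded distance of any vertex of $\cc W$, so $\pi_W(V(\M))$ is coarsely dense and therefore quasi-convex.

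The main obstacle is Axiom~9 (Uniqueness): if $d_W(\mu,\nu)\le K$ for every witness $W$, then $d_\M(\mu,\nu)\le K'(K)$. One cannot route through $\mathcal{MC}(\Sigma)$, because non-witness subsurfaces may carry arbitrarily large projection, so $d_{\mathcal{MC}}(\mu,\nu)$ is unbounded even when $d_\M(\mu,\nu)$ is bounded --- indeed the content of the theorem is precisely that projections to non-witnesses are invisible to the $\M$-metric. Instead I would build a bounded-length $\M$-path by induction on $\xi(\Sigma)$, with the low-complexity cases from the introduction ($\Sigma_1^0$ and $\xi(\Sigma)\le 1$) as the base. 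Since $\Sigma$ is itself a witness, $d_\Sigma(\mu,\nu)\le K$, so $\base\mu$ and $\base\nu$ lie within $K$ in $\cc\Sigma$; a geodesic between them furnishes a curve $\alpha$ that can be incorporated, at bounded $\M$-cost, into markings near each endpoint. Cutting along $\alpha$ drops the complexity, and I would apply the inductive hypothesis to the restricted graph on $\Sigma\setminus\alpha$, then lift the resulting bounded path back to $\M$.

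The delicate points, and where I expect the real work to lie, are threefold: (i) selecting $\alpha$ and performing the surgery so that every intermediate vertex stays in $V(\M)$ and has bounded intersection with its predecessor, which is exactly where Vokes' multicurve argument \cite{vokes} must be extended to carry transversal data and annular witnesses; (ii) checking that the witness set restricts correctly under cutting --- that a subsurface of $\Sigma\setminus\alpha$ is a witness for the restricted graph precisely when it is a witness for $\M$ --- so that the inductive hypothesis applies with the same constant $K$; and (iii) separately controlling the transversal and annular information at $\alpha$ and at the base curves, which is governed by the annular witness graphs and hence again bounded by $K$. Assembling these surgeries into a single path of length controlled by $K$ completes Axiom~9 and, with Axiom~1 and the inherited axioms, establishes the theorem.
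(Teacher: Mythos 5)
Your structural setup (index set $\X$, nesting, orthogonality, the $\rho$-maps inherited from $\mathcal{MC}(\Sigma)$, and your observations that $\X$ is upward-closed and that containers are witnesses, possibly disconnected --- matching the paper's footnote) agrees with the paper, and your Axiom~1 argument is essentially the paper's (Proposition~\ref{prop:pi_lip} for elementary moves, plus coarse density of a twist orbit when $W$ is annular). The genuine gap is that you work directly on $\M$ and assume its vertex set is flexible enough to realize prescribed data: you invoke the $\PMod(\Sigma)$-action to ``choose the realizing marking inside $V(\M)$'' for partial realization, and your Axiom~9 surgery ``incorporates'' a curve $\alpha$ from a $\cc\Sigma$-geodesic into vertices of $\M$. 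But an admissible partial marking graph can have a rigid vertex set --- for instance a single $\PMod(\Sigma)$-orbit of multicurves of fixed topological type, such as separating curves with empty transversals --- in which one cannot add a component, attach a transversal, or insert $\alpha$ while remaining in $V(\M)$. The point-dependent axioms (partial realization, uniqueness) therefore do not ``restrict'' from $\mathcal{MC}(\Sigma)$. The paper's essential missing ingredient is the universal model: it defines $\lx_\X$ (Definition~\ref{def:lx}), whose vertices are \emph{all} clean markings meeting every witness, shows $\mathcal{MC}(\Sigma)^{(1)}$ is $2\xi(\Sigma)$-coarsely dense in it (Lemma~\ref{lem:marking_completion}), and proves the inclusion $\iota\colon\M\to\lx_\X$ is a $\PMod(\Sigma)$-equivariant quasi-isometry --- the hard half being the coarse retraction $\mu\mapsto E_\mu$ with uniformly bounded fibers (Lemma~\ref{lem:marking_retract}), established by cutting along $\base\mu$ and exploiting cofiniteness of the $\Mod(K,\partial K)$-actions (Claims~\ref{claim:phi_ikj} and~\ref{claim:ikj}, together with Lemma~\ref{lem:di_bound}). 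Only on this flexible model are the axioms verified, and the structure is then transported to $\M$ by precomposing projections with $\iota$. Your proposal contains no substitute for this reduction.

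For Axiom~9, your induction on $\xi(\Sigma)$ by cutting along $\alpha$ is also not the paper's route, and the steps you yourself flag as delicate are where it fails: a subsurface of $\Sigma\setminus\alpha$ meeting every restricted vertex need not be a witness of $\M$, the restricted graph is not obviously admissible over $\PMod(\Sigma\setminus\alpha)$, and annular witnesses with core $\alpha$ carry twisting data that cutting destroys, so the inductive hypothesis does not apply as stated. The paper avoids any such induction: working in $\lx_\X$, it passes to base multicurves in the twist-free multicurve graph $\mathcal{K}_{\tilde\X}$ (Definition~\ref{def:kx}) and cites Vokes' bounded-path result \cite[Prop.~3.6]{vokes} to get a path of multicurves of length controlled by $K$; it then lifts this to a path of clean markings and controls projections to annular witnesses by a minimal-path argument that corrects accumulated twisting with powers of $\tau_a$, via the bound \eqref{eq:z_bound} and Claim~\ref{claim:projpreserved}. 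So while your proposal correctly identifies the two hard axioms and the role of Vokes' theorem, it lacks the universality/quasi-retraction step that makes the argument work for arbitrary admissible $\M$, and its uniqueness scheme would need to be replaced (or substantially repaired) along the paper's lines.
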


We follow the strategy in \cite{vokes}, extending as necessary to
our setting.  We first show that the quasi-isometry type of an
admissible partial marking graph $\M$ 
is fully determined by its set of
witnesses $\mathscr X$; in particular, there exists a canonical
``maximal'' representative $\M \xrightarrow{\text{q.i.}} 
  \mathcal{L}_{\mathscr{X}}$, which we then conclude to have the
  desired hierarchically hyperbolic structure.

\subsection{A universal partial marking graph}
  \label{sec:qi_marking}

We establish a bijection between the 
coarsely $\PMod(\Sigma)$-equivariant
quasi-isometry types of admissible partial marking graphs on
$\Sigma$ and certain
collections of connected essential
subsurfaces of $\Sigma$.

\begin{definition}\label{def:witness_set}
  A set $\mathscr{X}$ of essential, non-pants
  subsurfaces of $\Sigma$ is an
  \textit{admissible witness set} for $\Sigma$ 
  if it is closed under enlargement
  and the action of $\PMod(\Sigma)$.  Let the \textit{connected
  witness set} $\xc \subset \X$
  denote the connected subsurfaces in $\X$.
\end{definition}

The collection of witnesses for an
admissible partial marking graph $\M$ is an admissible 
witness set, denoted $\mathscr{X}^\M$. 
More generally, any collection of essential subsurfaces may be 
closed to a admissible witness set.  

\begin{theorem}\label{thm:qi_marking}
  The map $\M \mapsto \mathscr{X}^\M$
  induces a bijection $\Psi : [\M] \mapsto \xc^\M$ 
  between coarsely $\PMod(\Sigma)$-equivariant
  quasi-isometry types of admissible partial marking graphs and
  admissible connected witness sets.
\end{theorem}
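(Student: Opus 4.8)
The plan is to establish the bijection $\Psi : [\M] \mapsto \xc^\M$ by verifying three things: that the map is well-defined on quasi-isometry types, that it is surjective, and that it is injective. The key technical fact underlying everything is that the full quasi-isometry type of an admissible partial marking graph is coarsely determined by its witness set, and conversely that witnesses govern subsurface projection and hence (by the distance formula) the coarse geometry.

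\emph{Well-definedness.} First I would check that $\mathscr{X}^\M$ genuinely is an admissible witness set, i.e.\ that it is closed under $\PMod(\Sigma)$ (immediate from the natural action) and under enlargement (if $W$ meets every base or carries an annular core with non-empty transversal, then so does any essential $W' \supseteq W$, since any curve or arc meeting $W$ also meets $W'$). Crucially, one must verify that the witness set depends only on the coarse $\PMod(\Sigma)$-equivariant quasi-isometry type: if $\M \underset{\text{q.i.}}{\cong} \M'$ equivariantly, the two graphs share the same collection of vertices up to bounded subsurface projection data, so they have the same witnesses. Passing from $\X^\M$ to the connected part $\xc^\M$ loses no information because $\X^\M$ is recovered from $\xc^\M$ by closing under enlargement and disjoint union: a disconnected subsurface $W = V \sqcup V'$ is a witness exactly when each of its components is, so $\X^\M$ is determined by $\xc^\M$. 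This is what makes $\Psi$ well-defined as a map into \emph{connected} witness sets.

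\emph{Surjectivity.} Given any admissible connected witness set $\xc$, close it under enlargement and disjoint union to obtain an admissible witness set $\X$, and then build a partial marking graph realizing it. Here I would invoke the ``maximal'' representative $\mathcal{L}_{\mathscr{X}}$ alluded to just before the statement: its vertices are clean markings whose transversals are supported exactly on the annular witnesses in $\X$, with edges added between any two markings of bounded geometric intersection $i(\mu,\nu) \leq L$. One then checks this graph is connected, has the natural $\PMod(\Sigma)$ action, and has witness set precisely $\X$, so that $\Psi([\mathcal{L}_{\mathscr{X}}]) = \xc$. Connectivity and the computation of the witness set are the content to verify; admissibility of the intersection bound is built into the construction.

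\emph{Injectivity.} This is where I expect the main obstacle to lie, and it is the heart of the theorem. Suppose $\M$ and $\M'$ are admissible partial marking graphs with $\xc^\M = \xc^{\M'}$, hence $\X^\M = \X^{\M'} =: \X$. I would construct an explicit coarsely $\PMod(\Sigma)$-equivariant quasi-isometry between them by routing both through the common maximal representative $\mathcal{L}_{\X}$. The strategy, following Vokes, is to define a coarse map $\M \to \mathcal{L}_{\X}$ that coarsely preserves every subsurface projection $\pi_W$ for $W \in \X$, and then to show such a map is a quasi-isometry using Axiom~9 (Uniqueness) together with the distance formula of Theorem~\ref{thm:hhs_dist}: two markings are close in $\M$ if and only if all their witness projections are close, and the witness projections are intrinsic to the markings, not to the ambient graph. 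The delicate point is that markings in $\M$ and $\M'$ may differ in which transversals they record, but the shared witness set forces the \emph{relevant} annular projections to agree coarsely, while non-witness annular data contributes boundedly to distance and can be discarded. Making this trade-off quantitative---bounding the distortion introduced by adding or forgetting transversals not supported on witnesses, using Lemma~\ref{lem:marking_compat} to control the ambiguity of clean representatives---is the crux, and it is precisely what the construction in Section~\ref{sec:hhsm} leading up to $\mathcal{L}_{\mathscr{X}}$ is designed to supply. Once the quasi-isometry is in hand, equivariance follows from the naturality of the $\PMod(\Sigma)$ actions and of subsurface projection.
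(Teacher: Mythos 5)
Your outline of the three tasks (well-definedness, surjectivity, injectivity) matches the paper's, but the argument you propose for the crux---injectivity---is circular, and it skips precisely the step the paper spends most of its effort on. You propose to show the universal map $\M \to \lx_{\X}$ is a quasi-isometry ``using Axiom~9 (Uniqueness) together with the distance formula of Theorem~\ref{thm:hhs_dist}: two markings are close in $\M$ if and only if all their witness projections are close.'' But that biconditional \emph{for $\M$} is the uniqueness axiom/distance formula for $\M$, which in the paper is a \emph{consequence} of the quasi-isometry $\hat\iota:\M\to\lx_{\xc}$ (the HHS structure is first built on $\lx_\X$ and only then transported to $\M$ along $\hat\iota$). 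Axiom~9 for $\lx_\X$ (Lemma~\ref{lem:axiom9}) only controls $d_{\lx_\X}$, not $d_\M$. The genuinely hard direction---that $\hat\iota$ does not collapse distance, i.e.\ that the fibers $E_\mu=\{\nu\in V(\M): i(\mu,\nu)\le M\}$ have uniformly bounded diameter \emph{in $\M$} (Lemma~\ref{lem:marking_retract})---must be proved by elementary means before any hierarchical machinery is available. The paper does this with cofiniteness arguments (Lemma~\ref{lem:di_bound}, Remark after Lemma~\ref{lem:iota_surject}) and a piece-by-piece change-of-coordinates surgery over the complementary subsurfaces $K_j$ of $\mu$, using the actions of $\Mod(K_j,\partial K_j)$ on restricted markings $\omega|_{K_j}$ (Claims~\ref{claim:phi_ikj} and \ref{claim:ikj}); the key use of the hypothesis $K_j\notin\xc$ is that some vertex of $\M$ misses $K_j$, allowing $\nu_{j-1}$ to be adjusted inside $K_j$ at bounded $\M$-cost. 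Your appeal to Lemma~\ref{lem:marking_compat} to ``bound the distortion from transversal bookkeeping'' does not touch this: the issue is not ambiguity of clean representatives but that $V(\M)$ may be very sparse in $V(\lx_{\xc})$, so coarse uniqueness of a nearby $\M$-vertex is exactly what needs proof.

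Two further problems. For well-definedness, your claim that equivariantly quasi-isometric graphs ``share the same collection of vertices up to bounded subsurface projection data'' is unjustified---a quasi-isometry respects neither vertex labels nor projections a priori. The paper instead proves the contrapositive: if $W\in\xc'\setminus\xc$, a loxodromic $\varphi\in\Mod(W,\partial W)$ extended by the identity fixes a vertex of $\M$ disjoint from $W$ but acts with positive translation length on $\M'$ (since $\pi_W:\M'\to 2^{\cc W}$ is Lipschitz, which itself requires the universal quasi-isometry first). Separately, your assertion that ``a disconnected subsurface $W=V\sqcup V'$ is a witness exactly when each of its components is'' is false in the reverse direction---$W$ can be a witness while neither component is (every vertex need only meet \emph{one} of them), which is exactly why the paper's footnote warns that Axiom~3 may require disconnected witnesses, and why the bijection is stated for \emph{connected} witness sets with no claim that $\X^\M$ is determined by $\xc^\M$. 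Finally, your variant of $\lx_\X$ (transversals supported \emph{exactly} on annular witnesses, edges by bounded intersection) breaks the paper's connectivity argument: when $\X$ omits some annuli, complete clean markings are not vertices of your graph, so one cannot connect through $\mathcal{MC}(\Sigma)^{(1)}$ as in Lemma~\ref{lem:marking_completion}, and connectivity---which you leave as ``content to verify''---becomes a genuine open step.
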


That the map $\Psi$ is well defined
will follow from the fact that
witness subsurface projections of admissible marking graphs are 
Lipschitz, as we will see in Section~\ref{sec:lip_proj}. 
To show that $\Psi$ is bijective, we construct an admissible 
partial marking graph
$\mathcal{L}_{\xc}$ for any admissible witness set 
$\X$ such that 
\begin{enumerate*}[label=(\roman*)]
  \item $\hat \X^{\lx_\xc} = \hat \X$, and  
  \item for any admissible marking
graph $\M$ satisfying $\hat \X^\M = \hat \X$, there
is a coarsely $\PMod(\Sigma)$-equivariant quasi-isometry $\M
\to \mathcal{L}_{\xc^\M}$.
\end{enumerate*}

We extend the notion of an \textit{elementary move} on a clean
marking, as introduced in \cite{MMII}:
\begin{definition}
  Let $\mu$ be a clean marking on $\Sigma$, and suppose a clean
  marking $\mu'$ is
  obtained from $\mu$ by either
  \begin{enumerate}[label=(\roman*)]
    \item \textit{a twist:} replace some
      component $(a,\pi_a b) \in \mu$ with $(a, \pi_a \tau_a b)$,
      for $\tau_a$ a Dehn twist or a half Dehn twist
      about $a$.
    \item \textit{a flip:} replace some
      component $(a,\pi_a b) \in \mu$ with $(b, \pi_b a)$ and choose
      a clean marking compatible with the result.
\end{enumerate}
  Then $\mu'$ is obtained from $\mu$ by an \textit{elementary move}.
  (We assume $i(a,b) > 0$.)
\end{definition}

\begin{remark}
  While not canonical in a strict sense, by
  Lemma~\ref{lem:marking_compat} flip moves are unique up to
  finitely many choices for fixed $a,b$, 
  all uniformly close after projection to
  witnesses.
\end{remark}

\begin{definition}\label{def:lx}
  Let $\X$ be an admissible witness set on $\Sigma$.
  Define $\lx_\X$ to be the
  simplicial graph whose vertices are all clean markings that meet
  every surface in $\X$, in the sense of
  Definition~\ref{def:marking_witness}, and for which $(\mu,\nu) \in
  E(\lx_\X)$ if and only if $\mu$ is obtained from $\nu$ by either 
  \begin{enumerate}[label=(\roman*)]
    \item adding or removing a component $(a,t)$, or
    \item adding or removing a transversal, or
    \item an elementary move.
  \end{enumerate}
  Let $\lx_{\xc}$ be defined analogously, replacing
  $\X$ with $\xc$.
\end{definition}

\begin{lemma}\label{lem:a}
  Let $\X$ be an admissible witness set.  $\X^{\lx_\X} = \X$
  and $\xc^{\lx_\xc} = \hat \X$.
\end{lemma}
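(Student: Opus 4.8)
The plan is to prove each equality by a pair of inclusions, where one direction is formal and the other rests on a single construction. For $\X^{\lx_\X}=\X$, the inclusion $\X\subseteq\X^{\lx_\X}$ is immediate: by Definition~\ref{def:lx} every vertex of $\lx_\X$ is a clean marking meeting every surface in $\X$, and every element of $\X$ is essential and non-pants by Definition~\ref{def:witness_set}, so each $W\in\X$ satisfies the witness condition of Definition~\ref{def:marking_witness} against all of $V(\lx_\X)$. The entire content lies in the reverse inclusion, which I would establish contrapositively: given an essential, non-pants $W\notin\X$, I will exhibit a single vertex $\mu\in V(\lx_\X)$ that fails to meet $W$, so that $W$ cannot be a witness of $\lx_\X$.

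The construction uses closure under enlargement in one crucial place. Since $\X$ is closed under enlargement and $W\notin\X$, no element of $\X$ is nested in $W$: if $V\in\X$ and $V\subseteq W$, then $W$, being an enlargement of $V$, would itself lie in $\X$. Hence every $V\in\X$ satisfies $V\not\subseteq W$. Now set $Y\dfn\overline{\Sigma\setminus W}$ and let $\mu$ be the clean marking on $\Sigma$ whose base is a pants decomposition of $Y$ together with $\partial W$, equipping the interior curves of $Y$ with the transversals of a complete clean marking on $Y$ and giving the curves of $\partial W$ empty transversals. Removing any interior base curve leaves a complexity-$1$ complement component, since $\partial W\subseteq\base\mu$ still separates off $W$; hence $\mu$ is locally complete and clean, and is a genuine vertex of $\lx_\X$ provided it meets every surface in $\X$. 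By construction $\base\mu$ is disjoint from the interior of $W$, and any base curves parallel to cores of annular components of $W$ lie in $\partial W$ and thus carry empty transversals, so $\mu$ does not meet $W$.

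It remains to verify that $\mu$ meets every $V\in\X$, and this is the step I expect to be the main obstacle, since it is where the filling property of $\mu$ must be used. The key fact is that a marking filling $Y$ projects nontrivially to every essential subsurface not isotopic into $W$. If $\base\mu$ could be isotoped off a non-annular $V$, then $V$ would embed in a component of $\Sigma\setminus\base\mu$; these components are pairs of pants inside $Y$ together with $W$ itself, so $V\subseteq W$, contradicting $V\not\subseteq W$. For annular $V$ with core $c$, either $c$ has essential intersection with $\base\mu$, giving $\pi_V(\mu)\neq\varnothing$ by condition~(i) of Definition~\ref{def:marking_witness}; or $c$ is isotopic to an interior base curve of $Y$, which carries a transversal, giving condition~(ii); or $c$ is isotopic to a component of $\partial W$ or into a complementary piece, forcing $V\subseteq W$ again. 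Thus $V\not\subseteq W$ yields $\pi_V(\mu)\neq\varnothing$ in every case; for disconnected $V=\bigsqcup_j V_j\in\X$ it suffices that some component $V_j\not\subseteq W$ be met, which holds since $V\not\subseteq W$. Hence $\mu$ meets every $V\in\X$, completing the reverse inclusion.

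Finally, I would obtain $\xc^{\lx_\xc}=\hat\X$ by the same argument restricted to connected surfaces. The inclusion $\hat\X\subseteq\xc^{\lx_\xc}$ is again formal. For the reverse, let $W$ be a connected witness of $\lx_\xc$ with $W\notin\hat\X$; since $\hat\X$ consists of the connected members of $\X$ and $W$ is connected, this forces $W\notin\X$. Closure of $\X$ under enlargement again gives $V\not\subseteq W$ for every $V\in\hat\X$, and the identical complement-filling marking $\mu$ meets every connected surface in $\hat\X$ while missing $W$. Therefore $\mu\in V(\lx_\xc)$ witnesses that $W$ is not a witness of $\lx_\xc$, a contradiction, which establishes the reverse inclusion and the lemma.
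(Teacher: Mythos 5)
Your proposal is correct and follows essentially the same route as the paper: the inclusion $\X \subseteq \X^{\lx_\X}$ is formal, and the converse is proved by taking a complete, clean marking on $\Sigma \setminus W$ and adjoining the components of $\partial W$ with empty transversals, with closure under enlargement guaranteeing (since no element of $\X$ nests in $W$) that this marking is a vertex missing $W$. The only difference is one of exposition: you spell out the verification that $\mu$ meets every $V \in \X$ (including the annular and disconnected cases), which the paper's proof leaves implicit in the assertion ``$\mu \in V(\lx_\X)$.''
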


\begin{proof}
  $\X \subset \X^{\lx_\X}$
since by definition the vertices of $\lx_\X$ meet every subsurface
in $\X$, and likewise $\hat \X \subset \X^{\lx_\xc}$ hence 
$\hat \X \subset \hat \X^{\lx_\xc}$.  
Conversely, suppose $W \not\in \X$ is an essential,
non-pants subsurface of $\Sigma$.  $\X$ is closed under enlargement,
hence $W$ contains no subsurfaces in $\X$.  Let $\mu$ be the clean
marking on $\Sigma$ obtained from any complete, clean marking on
$\Sigma \setminus W$ by adding distinct components in $\partial W$
with empty transversals.  Then $\mu \in V(\lx_\X)$ and does not
meet $W$. $W \notin \X^{\lx_\X}$. Likewise, if $W \not\in \xc$
is an essential connected non-pants subsurface, then $W \not\in
\hat \X^{\lx_\xc}$.
\end{proof}

It is clear that $\PMod(\Sigma)$ acts naturally on $\lx_\X$ and
$\lx_\xc$ and that, in either graph, 
two markings are adjacent only if their geometric intersection is
less than some uniform constant $L$.
We prove below that $\lx_\X$ and $\lx_\xc$ are connected,
hence admissible.  In addition, 
$\lx_\xc$ is universal in the following sense: suppose that $\M$ is
an admissible marking graph such that $\X^\M \supset \xc$.  Then
every marking in $\M$ meets every witness in $\xc$, hence $V(\M)
\subset V(\lx_\xc)$ 
and is preserved by the action $\PMod(\Sigma)$; 
we show below that this inclusion is Lipschitz, hence induces
a $\PMod(\Sigma)$-equivariant 
coarse Lipschitz map $\hat \iota : \M \to \lx_\xc$. An identical
argument gives a $\PMod(\Sigma)$-equivariant coarse Lipschitz map
$\iota : \M \to \lx_\X$ whenever $\X^\M \supset \X$, in which case
$\hat \iota $
factors coarsely (exactly on vertices)
as the composition of the universal maps $\M \to \lx_\X \to \lx_\xc$
induced by the inclusions $\X^\M \supset \X \supset \xc$. 

We note that
$\mathcal{MC}(\Sigma)^{(1)}$ admits every essential, non-pants
subsurface as a witness, hence $\X(\mathcal{MC}(\Sigma)^{(1)})
\supset \X \supset \xc$.  Since the edges of 
$\mathcal{MC}(\Sigma)^{(1)}$ 
correspond to elementary moves, $\iota$ may be taken to be a
(simplicial) embedding and $\mathcal{MC}(\Sigma)^{(1)}$ as a
connected $\PMod(\Sigma)$-invariant full subgraph of $\lx_\X$, which
is in turn a $\PMod(\Sigma)$-invariant full subgraph of $\lx_\xc$. 
Hence to show that $\lx_\X$ and $\lx_\xc$ are connected,
it suffices to prove that every marking lies in the same component 
as $\mathcal{MC}(\Sigma)^{(1)}$, as the following lemma shows:

\begin{lemma}\label{lem:marking_completion}
 Any marking $\mu$ in $V(\lx_\X)$ or $V(\lx_\xc)$ 
 may be completed to a complete, clean marking $\mu'$ through a 
sequence of length at most $2\xi(\Sigma) - |\mu|$ 
of adding components and transversals.
\end{lemma}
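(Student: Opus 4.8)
The plan is to complete $\mu$ using additions alone, carried out in two phases. Write $c = |\base\mu|$ and let $t$ be the number of components of $\base\mu$ carrying a non-empty transversal, so that $|\mu| = c + t$. In the first phase I extend $\base\mu$ to a pants decomposition by repeatedly adding a component with empty transversal; since a pants decomposition has exactly $\xi(\Sigma)$ components, this takes $\xi(\Sigma) - c$ moves. In the second phase, with the base now maximal, I add a clean transversal to each of the $\xi(\Sigma) - t$ base curves still lacking one, using one transversal-addition per curve. The resulting path has length $(\xi(\Sigma) - c) + (\xi(\Sigma) - t) = 2\xi(\Sigma) - |\mu|$, and each move is an edge of $\lx_\X$ (resp.\ $\lx_\xc$) by cases (i) and (ii) of Definition~\ref{def:lx}. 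It remains to check that each intermediate marking is an honest vertex, namely that it is clean and meets every witness.

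Meeting every witness is immediate and identical in both $\lx_\X$ and $\lx_\xc$: by Definition~\ref{def:marking_witness}, whether a marking meets a witness $W$ is a monotone condition under adding components and transversals, so since $\mu$ meets every witness, so does any marking obtained from it by additions. Thus the only real issue is cleanness, and the crux is to see that the pre-existing transversals stay clean throughout the first phase.

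Suppose $a_i \in \base\mu$ carries a transversal. Local completeness gives that the component $R$ of $\Sigma$ cut along $\base\mu \setminus \{a_i\}$ that contains $a_i$ satisfies $\xi(R) = 1$, so $R \cong \Sigma_1^1$ or $\Sigma_0^4$ and $a_i$ cuts $R$ into pants; in particular the complementary regions of $\base\mu$ incident to $a_i$ are already pairs of pants. Now any curve added in the first phase is essential, non-peripheral, and disjoint from $\base\mu$. A curve lying in $R$ and disjoint from $a_i$ is, in the complexity-$1$ surface $R$, either isotopic to $a_i$ or peripheral in $R$ --- hence isotopic to a component of $\base\mu$ or to $\partial\Sigma$ --- because distinct essential curves in a complexity-$1$ surface have positive geometric intersection; none of these is permissible for a new, distinct multicurve component, so no added curve lies in $R$. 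Therefore $R$ is unchanged by the completion and local completeness, hence cleanness, persists for every transversal-bearing curve at every stage. This also shows the first-phase curves are added in the non-pants complementary regions of $\base\mu$, which is always possible and accounts for exactly $\xi(\Sigma) - c$ curves. Finally, once the base is a pants decomposition, removing any single component $a_j$ leaves a complexity-$1$ region, in which a clean transverse curve meeting $\base\mu$ only in $a_j$ exists by \cite{MMII}; taking its projection as the transversal yields a clean marking, completing the second phase.
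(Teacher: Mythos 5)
Your proof is correct and follows the paper's argument exactly: the paper likewise adds $\xi(\Sigma)-|\mu|$ base components with empty transversals and then adds clean transversals to the resulting complete marking, justifying the first phase by the observation that local completeness prevents any new base curve from meeting the existing clean transverse curves. Your complexity-$1$ analysis of the region $R$ is simply a fuller write-up of that one-line justification, so the two proofs coincide in substance.
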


\begin{proof}
  We note that we may always add components with empty transversal:
  since $\mu$ is already locally complete, no new base curve
  intersects existing clean transverse curves.  
  Likewise, we may always add (clean) transversals to complete
  markings.  Add $\xi(\Sigma) - |\mu|$
  components with empty transversal to obtain the complete marking
  $\mu''$. For at most $\xi(\Sigma)$ components with empty 
  transversal in $\mu''$, add transversals to obtain a 
  complete, clean marking $\mu' \in \mathcal{MC}(\Sigma)^{(1)}$.
\end{proof}

\subsubsection{The quasi-isometry}

We restate the arguments in \cite{vokes}, with a substantial
adapatation 
of the proof of the existence of a quasi-retraction for our setting.
Let $\M$ be an admissible partial marking graph on $\Sigma$ with
witness set $\X$, and let $\hat \iota : \M \to \lx_\xc$ be the universal
coarse map from above.  We show that $\hat \iota$ is a quasi-isometry.

We will 
use a key feature of admissibility, namely the existence of an
upper bound on distance in terms of geometric intesection number:

\begin{lemma}\label{lem:di_bound}
  Any admissible partial marking graph $\M$ admits a monotonic 
  function $f_\M
  : \N \to \N$ such that for all $\mu,\nu \in V(\M)$, $d(\mu,\nu)
  \leq f_\M(i(\mu,\nu))$.
\end{lemma}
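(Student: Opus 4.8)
The plan is to take the tautological candidate
\[
f_\M(n) \dfn \sup\{\, d(\mu,\nu) : \mu,\nu \in V(\M),\ i(\mu,\nu) \le n \,\},
\]
which is monotonic in $n$ by construction and which satisfies the asserted inequality the instant it is known to be finite. Since $\M$ is connected, $d(\mu,\nu) < \infty$ for every pair, so the whole content is to show that this supremum is finite for each fixed $n$. The key observation is that $\PMod(\Sigma)$ acts on $\M$ by simplicial automorphisms, hence by isometries, and preserves geometric intersection number; thus both $d(\mu,\nu)$ and $i(\mu,\nu)$ are invariant under the diagonal action on $V(\M)\times V(\M)$. Consequently $d$ is constant on each $\PMod(\Sigma)$-orbit of pairs, and it suffices to show that for every $n$ the set $P_n \dfn \{(\mu,\nu) : i(\mu,\nu)\le n\}$ meets only finitely many $\PMod(\Sigma)$-orbits: then $f_\M(n)$ is a maximum of finitely many finite values.

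To bound the number of orbits I would represent each marking $\mu$ by the curve system $C_\mu$ consisting of $\base\mu$ together with a choice of clean transverse curve $b_j$ for each base component $a_j$ with nonempty transversal, noting that $C_\mu$ (with its base/transversal labelling) determines $\mu$, since $\trans(\mu,a_j) = \pi_{a_j} b_j$. I would then show that the total pairwise geometric intersection of the combined system $C_\mu \cup C_\nu$ is bounded by a function of $n = i(\mu,\nu)$. The term $i(\base\mu,\base\nu)$ is bounded by $n$ directly, and the self-intersections among the $a_j$ and $b_j$ of a single clean marking are uniformly bounded because each $b_j$ fills a complexity-$1$ subsurface with $a_j$. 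The remaining cross terms — the intersections of a transverse curve $b_j$ of $\mu$ with $\base\nu$ and with the transverse curves of $\nu$ — are the delicate point: each $b_j$ is supported in a complexity-$1$ subsurface $F_j$ whose boundary lies in $\base\mu$, and standard complexity-$1$ estimates bound such an intersection in terms of $i(\,\cdot\,,a_j)$, which is controlled by $i(\base\mu,\base\nu)\le n$, and the annular projection $d_{a_j}$, which is controlled by the corresponding transversal term of $i(\mu,\nu)\le n$.

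Once the total intersection number of $C_\mu\cup C_\nu$ is bounded, finiteness of orbits is classical: a curve system of bounded cardinality and bounded total intersection, placed in minimal position inside the fixed surface $\Sigma$, has a union with a bounded number of complementary regions of bounded total complexity, so there are only finitely many topological types; each type is a single $\Mod(\Sigma)$-orbit by the change of coordinates principle, recording the base/transversal labelling multiplies this by a finite factor, and passing from $\Mod(\Sigma)$ to the finite-index $\PMod(\Sigma)$ keeps the count finite. This yields finiteness of $P_n/\PMod(\Sigma)$, and hence of $f_\M(n)$, completing the argument.

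The main obstacle is the cross-term estimate of the second paragraph: it is precisely where local completeness and cleanness of the markings are used, as it is what prevents a transverse curve of one marking from meeting the other marking arbitrarily often while $i(\mu,\nu)$ remains bounded. Everything else — the passage from a bounded intersection bound to finitely many orbits, and from finitely many orbits to a finite supremum — is soft and uses only the $\PMod(\Sigma)$-invariance of $d$ and $i$ together with the connectivity of $\M$ guaranteed by admissibility.
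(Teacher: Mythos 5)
Your proposal is correct and is essentially the paper's own argument: the paper justifies Lemma~\ref{lem:di_bound} by precisely the same mechanism, namely that connectivity of $\M$ together with the natural action of the finite-index subgroup $\PMod(\Sigma) \leq \Mod(\Sigma)$ suffices because $\Mod(\Sigma)$ acts cofinitely on pairs of clean markings with geometric intersection below any fixed bound, so $d$ is a $\PMod(\Sigma)$-invariant, finite-valued function on finitely many orbits of such pairs. The only difference is one of detail: the paper asserts this cofiniteness as standard, while you additionally sketch its proof (correctly, including flagging the cross-term estimate for transverse curves in their complexity-$1$ supporting subsurfaces as the only non-soft step) via bounded-intersection labelled curve systems and the change of coordinates principle.
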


\noindent
In fact, Lemma~\ref{lem:di_bound} follows only
from connectivity and the
natural action of a finite index subgroup $H \leq \Mod(\Sigma)$: for
any $M \geq 0$,
$\Mod(\Sigma)$ acts cofinitely on the set of pairs of clean markings
with geometric intesection less than $M$. 
However, given that $(\mu,\nu) \in E(\M)$ only if $i(\mu,\nu) \leq
L$, it is immediate that $\hat \iota$ is 
$f_{\lx(\xc)}(L)$-Lipschitz;
we remark that this argument
implies that the universal maps $\hat \iota : \M \to \lx_\xc$ and 
$\iota : \M \to \lx_\X$ are always Lipshitz, even if
$\X^\M \supsetneq \X$.

Coarse surjectivity likewise 
results from Lemma~\ref{lem:di_bound} and the
following:

\begin{lemma}\label{lem:iota_surject}
  There exists $M \geq 0$ such that 
  for any $\mu \in V(\lx_\xc)$, there exists $\nu \in V(\M)$ such
  that $i(\mu,\nu) \leq M$.
\end{lemma}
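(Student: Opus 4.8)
The plan is to reduce the statement to the $\PMod(\Sigma)$-invariance of $V(\M)$ and of the geometric intersection number, after establishing that clean markings fall into only finitely many $\PMod(\Sigma)$-orbits. Concretely, I would fix representatives $\mu_1,\dots,\mu_k$ of the $\PMod(\Sigma)$-orbits of clean markings together with any basepoint $\nu_0 \in V(\M)$, which exists as $\M$ is connected and hence nonempty. Set $M \dfn \max_{1 \le i \le k} i(\mu_i,\nu_0)$; this is finite, since the geometric intersection of two fixed clean markings is a finite sum of finite terms in Definition~\ref{def:geom_int}. Given an arbitrary $\mu \in V(\lx_\xc)$, write $\mu = g\mu_i$ for some $g \in \PMod(\Sigma)$ and some $i$. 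Because $V(\M)$ is $\PMod(\Sigma)$-invariant, $\nu \dfn g\nu_0 \in V(\M)$, and because $i(\cdot,\cdot)$ is invariant under the mapping class group action, $i(\mu,\nu) = i(g\mu_i,g\nu_0) = i(\mu_i,\nu_0) \le M$. This $\nu$ and this uniform $M$ witness the conclusion. Note that the defining condition of $V(\lx_\xc)$, meeting every connected witness, is used only to know that $\mu$ is a genuine vertex; the argument applies verbatim to any $\PMod(\Sigma)$-invariant family of clean markings.

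The substantive point is thus the orbit-finiteness claim, which I would extract from the cofiniteness already invoked for Lemma~\ref{lem:di_bound}: for every $M' \ge 0$, $\Mod(\Sigma)$ acts with finitely many orbits on the set of pairs of clean markings of geometric intersection less than $M'$. I would apply this along the diagonal $\{(\mu,\mu)\}$. A direct estimate from Definition~\ref{def:geom_int} bounds the self-intersection: with $\nu = \mu$ one has $i(\base\mu,\base\mu) = 0$, while $\pi_{a_j}(\base\mu) = \varnothing$ since the base curves are pairwise disjoint and a curve has no projection to its own annulus, so each surviving summand is the diameter of a single transversal and hence at most $1$. Summing gives $i(\mu,\mu) \le 3\xi(\Sigma)$ for every clean marking $\mu$. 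Consequently every clean marking occurs in a diagonal pair of geometric intersection less than $3\xi(\Sigma)+1$, and since the map $(\mu,\mu)\mapsto\mu$ is $\Mod(\Sigma)$-equivariant and injective on the diagonal, the clean markings meet only finitely many $\Mod(\Sigma)$-orbits; as $\PMod(\Sigma)$ has finite index in $\Mod(\Sigma)$, the same holds for $\PMod(\Sigma)$. (Equivalently, this is the change of coordinates principle: a clean marking is classified up to $\PMod(\Sigma)$ by finite topological data, namely the topological type of $\base\mu$ together with the finitely many twist-classes of clean transversals in each complexity-$1$ complementary piece.)

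The main obstacle, such as it is, is confined to this orbit-finiteness step: once it is in hand, the uniform bound $M$ comes for free by maximizing over finitely many representatives, and no control over the internal geometry of $\M$, nor over how sparsely $V(\M)$ sits inside $V(\lx_\xc)$, is needed. The reduction itself is purely formal equivariance and should present no difficulty; the only care required is to confirm that the self-intersection estimate, or equivalently the change-of-coordinates classification, genuinely covers \emph{partial} clean markings with some empty transversals and fewer than $\xi(\Sigma)$ base curves, and not merely the complete clean markings on which the cocompactness of $\mathcal{MC}(\Sigma)$ is customarily stated.
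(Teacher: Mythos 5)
Your proposal is correct and takes essentially the same route as the paper: the paper dispatches this lemma via the remark immediately following it, namely that for any $G$-invariant function $s : A \times B \to \N$ on cofinite $G$-sets, $\min_{b \in B} s(a,b)$ is uniformly bounded in $a$, applied with $A = V(\lx_\xc)$, $B = V(\M)$, $s = i$, and $G = \PMod(\Sigma)$ --- which is precisely your orbit-representative/equivariance argument unpacked. Your only addition is to make explicit the cofiniteness of the $\PMod(\Sigma)$-action on clean markings (via the diagonal self-intersection bound $i(\mu,\mu) \leq 3\xi(\Sigma)$ fed into the pairs-cofiniteness fact already invoked for Lemma~\ref{lem:di_bound}), a point the paper leaves implicit, and that verification is sound.
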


\begin{remark*}\label{rmk:iota_surject}
  Lemma~\ref{lem:iota_surject} is a 
  special case of following
  general fact: for any $G$-invariant function 
  $s : A \times B \to \N$ on cofinite $G$-sets $A,B$, $\min_{b\in
  B} s(a,b)$ is uniformly bounded for $a \in A$.  
  In fact, it follows that for any cofinite $G$-set $A$, there
  exists $C \geq 0$ such that for any $a, a' \in A$, there exists $g
  \in G$ such that $s(a,ga') \leq C$: let $C$ be the maximum of the
  bounds obtained for $B \in A/G$.  We will use this fact below.
\end{remark*}

Finally, we construct a coarse Lipschitz
retraction $\rho : \mu \mapsto E_\mu$,
where $E_\mu = \{\nu \in V(\M) : i(\mu,\nu) \leq M\}$ and $M$ is
the constant in Lemma~\ref{lem:iota_surject}.  By
Lemma~\ref{lem:di_bound}, it is clear that $\hat \iota\rho$ is coarsely
identity. Moreover, we note that $\PMod(\Sigma)$ acts cofinitely on
pairs of adjacent vertices $(\mu,\mu') \in E(\lx_\xc)$ since
$i(\mu,\mu') < L$, and that 
$\rho$ is $\PMod(\Sigma)$-equivariant, hence $d_\M(E_\mu,E_{\mu'})$
is uniformly bounded.
It remains only to check the following:

\begin{lemma}\label{lem:marking_retract}
  $\diam_\M(E_\mu)$ is uniformly bounded.
\end{lemma}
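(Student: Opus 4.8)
The plan is to convert a bound on geometric intersection into a bound on $\M$-distance via Lemma~\ref{lem:di_bound}, after first reducing to a single marking $\mu$ and then ``pruning'' the part of each $\nu \in E_\mu$ that $\mu$ cannot see. First I would reduce to fixed $\mu$: the assignment $\mu \mapsto \diam_\M(E_\mu)$ is $\PMod(\Sigma)$-invariant, since $E_{g\mu} = gE_\mu$ and $\PMod(\Sigma)$ acts on $\M$ by isometries. By the change of coordinates principle there are only finitely many $\Mod(\Sigma)$-orbits, hence finitely many $\PMod(\Sigma)$-orbits, of clean markings; as $V(\lx_\xc)$ is a union of such orbits, it suffices to show $\diam_\M(E_\mu) < \infty$ for each of finitely many orbit representatives $\mu$, and the desired uniform bound is then the maximum of these finitely many values.

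Fix such a $\mu$ and take $\nu,\nu' \in E_\mu$, so $i(\mu,\nu),i(\mu,\nu') \le M$. It would suffice to bound $i(\nu,\nu')$ and apply Lemma~\ref{lem:di_bound}, but this fails in general: writing $Y$ for the complement of a regular neighborhood of $\base\mu$, the markings $\nu,\nu'$ may intersect arbitrarily inside $Y$. The point is that $Y$ contains no connected witness --- every $W \in \xc$ meets $\mu$ --- so $\nu$ and $\nu'$ have uniformly bounded subsurface projection to every connected witness (bounded geometric intersection with $\mu$ gives a bounded $d_W(\cdot,\mu)$, and the triangle inequality in $\cc W$ closes up), and they can differ substantially only within the witness-free subsurface $Y$.

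The heart of the argument is therefore to realize the difference in $Y$ by a uniformly bounded path in $\M$. I would connect each $\nu \in E_\mu$, through boundedly many markings of $V(\M)$ at pairwise bounded geometric intersection, to a standardized marking $\nu^\flat$ obtained by pruning the components of $\nu$ supported in $Y$ and replacing the remaining $Y$-data by a fixed model; since $\PMod(\Sigma)$ acts cofinitely on bounded-intersection pairs of clean markings, the Remark following Lemma~\ref{lem:iota_surject} supplies uniform bounds both on the number of steps and on the intersection numbers involved, so Lemma~\ref{lem:di_bound} bounds each step in $\M$. The standardized markings $\nu^\flat$ meet $\mu$ boundedly and fall into finitely many $\stab(\mu)$-types, whence they are pairwise of bounded geometric intersection and, by Lemma~\ref{lem:di_bound} again, pairwise uniformly $\M$-close. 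Combining these bounds gives $\diam_\M(E_\mu) < \infty$.

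The main obstacle is the pruning step: each intermediate marking must remain a vertex of $\M$, i.e.\ it must continue to meet every witness in $\X$, including the possibly disconnected ones, even as components in the witness-free region $Y$ are removed or standardized. Reconciling this global witness condition with the purely local simplification in $Y$ --- and extracting the uniform step- and intersection-bounds from cofiniteness --- is precisely the place where Vokes' argument must be substantially adapted to the present, non-twist-free setting.
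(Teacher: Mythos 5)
Your skeleton (reduce via cofiniteness, standardize $\nu \in E_\mu$ inside the non-witness complementary region, then close with Lemma~\ref{lem:di_bound}) points in the right direction, but the mechanism you propose for the standardization step fails, and you yourself flag it as an unresolved ``main obstacle''---which is exactly where the content of the paper's proof lives. The fatal issue with pruning is not merely the witness condition: $\M$ is an \emph{arbitrary} admissible partial marking graph, so $V(\M)$ is just some connected, $\PMod(\Sigma)$-invariant set of clean markings with bounded intersection along edges. If $\M = \mathcal{MC}(\Sigma)^{(1)}$, say, then \emph{no} marking obtained by deleting a component or splicing in a ``fixed model'' on $Y$ is a vertex at all, regardless of witnesses. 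The only modifications guaranteed to stay in $V(\M)$ are translations by $\PMod(\Sigma)$, and this is how the paper proceeds: it cuts along $\mu$ into pieces $K_j$ (complementary components \emph{and} the annular components of $\overline{N}$), and inductively replaces $\nu_{j-1}$ by $\nu_j = \phi\,\nu_{j-1}$ for $\phi \in \Mod(K_j,\partial K_j)$ extended by the identity, chosen via Claim~\ref{claim:phi_ikj} so that $i_{K_j}(\nu_j|_{K_j},\nu'|_{K_j}) \leq D$. The cost $d_\M(\nu_j,\nu_{j-1})$ is controlled not by projections but by a waystation: since $K_j$ is not a witness there exists $\omega \in V(\M)$ disjoint from $K_j$, which (after translating $\omega$ by $\Mod(K^j,\partial K^j)$, again staying in $V(\M)$) has intersection at most $D+C$ with both $\nu_{j-1}$ and $\nu_j$ by Claim~\ref{claim:ikj}, giving $d_\M(\nu_j,\nu_{j-1}) \leq 2f_\M(D+C)$. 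The non-witness hypothesis enters precisely as the existence of $\omega$; your sketch never uses it operationally. (Your appeal to bounded projections to connected witnesses is also a red herring here---deducing bounded $\M$-distance from bounded projections is the uniqueness axiom, proved separately later, so leaning on it would be circular.)

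A second concrete gap: your region $Y$ is the complement of a regular neighborhood of $\base\mu$, which omits the annular pieces, and this loses exactly the twisting degrees of freedom. If $a \in \base\mu$ carries an empty transversal and its annulus is not a witness, then arbitrary powers $\tau_a^n\nu$ remain in $E_\mu$ (check Definition~\ref{def:geom_int}: no term of $i(\mu,\tau_a^n\nu)$ grows), yet all these markings agree after any standardization supported in $Y$, so your $\nu^\flat$'s would \emph{not} fall into finitely many $\stab(\mu)$-types with pairwise bounded intersection. The paper handles this by including the annuli of $\overline{N}$ among the $K_j$ and splitting: if the annulus is not a witness, the $\phi$/waystation argument above absorbs the twisting; if it is a witness, then $\mu$ must carry a nonempty transversal there (by Definition~\ref{def:marking_witness} applied to $\mu$ itself), so $i(\mu,\nu), i(\mu,\nu') \leq M$ already bounds the relative twisting and one simply sets $\nu_j = \nu_{j-1}$. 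Your opening reduction to finitely many $\PMod(\Sigma)$-orbits of $\mu$ is correct but inessential (the paper extracts uniform constants directly from cofiniteness via Remark~\ref{rmk:iota_surject}, as you anticipated), and in any case it does not touch the real difficulty, since $\diam_\M(E_\mu) < \infty$ for a single $\mu$ is already the whole problem.
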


We must first build some machinery.  Let $K \subset \Sigma$ be an
essential, non-peripheral, and possibly disconnected subsurface.  
Given $\omega$ a clean
marking on $\Sigma$, fix a representative in minimal position with
$\partial K$ and let $\omega|_K \dfn 
(\sigma,\alpha)$, where $\sigma \subset \omega$ is
  the submarking comprised
  of components whose representative curve is 
  fully contained in $K$, and
  $\alpha$ is an arc system  
  obtained from the remaining components by taking the
  intersection of their representative curves
  with $K$.  We regard $\omega|_K$ up to isotopy, rel
  $\partial K$.  Let $i_{K}(\omega|_K,\omega'|_K) \dfn
    i(\sigma,\sigma')
  + i(\alpha,\sigma') + i(\sigma,\alpha') + i(\alpha,\alpha')$,
  where $\omega|_K = (\sigma,\alpha)$ and $\omega'|_K =
  (\sigma',\alpha')$ and the 
  geometric intersection number between arc systems
  and markings is defined as follows:
  \[
    i(\nu,\alpha) \dfn i(\base \nu,\alpha) + \sum_{(a,t) \in \nu}
    \diam_{\cc(a)}(t\cup\pi_a \alpha)
  \]
  We emphasize that $\omega|_K$ is well
  defined only up to choice of representative isotopic
  rel $\partial K$, which we assume to be fixed for given $\omega$ 
  unless otherwise specified.  
  $\Mod(K,\partial K)$ acts on the set
  of pairs $\omega|_K$ and preserves $i_{K}$.

\begin{remark}
  Here our notation differs with that of \cite{MMII}, where
  $\omega|_K$ instead denotes the \textit{restriction} 
  of $\omega$ to $K$.
\end{remark}

\begin{claim}\label{claim:phi_ikj} 
  There exists $D \geq 0$ such that 
  for any $\omega|_K,\omega'|_K$ with $i(\omega,\partial K),
  i(\omega',\partial K)$ at most $M$,
  there exists $\phi \in
  \Mod(K,\partial K)$ for which $i_{K}(\phi\omega|_K,
  \omega'|_K) \leq D$.
\end{claim}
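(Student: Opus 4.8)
The plan is to deduce the claim from the general cofiniteness fact recorded in Remark~\ref{rmk:iota_surject}. Set $G = \Mod(K,\partial K)$ and let $A_M$ denote the set of all restriction data $\omega|_K = (\sigma,\alpha)$, taken up to isotopy rel $\partial K$, arising from clean markings $\omega$ on $\Sigma$ with $i(\omega,\partial K) \leq M$. As noted in the setup, $G$ acts on $A_M$ and preserves $i_K$, which is a symmetric, $\N$-valued, $G$-invariant function on $A_M \times A_M$. Granting that $A_M$ is a \emph{cofinite} $G$-set, Remark~\ref{rmk:iota_surject} then supplies a constant $D$ such that for any $\omega|_K,\omega'|_K \in A_M$ there is $\phi \in G$ with $i_K(\phi\,\omega|_K,\omega'|_K) \leq D$, which is exactly the claim. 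Thus the entire content is the finiteness of $A_M/G$.

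To bound the number of orbits, I first bound the complexity of the underlying arc-and-curve system $\underline{a} = \base\sigma \cup \alpha$. Since $\base\sigma \subseteq \base\omega$, the number of curves in $\underline a$ is at most $\xi(\Sigma)$; and since each arc of $\alpha$ is a component of $\base\omega$ cut along $\partial K$, the number of arcs is controlled by $i(\base\omega,\partial K) \leq i(\omega,\partial K) \leq M$. Hence every such $\underline a$ is an essential arc-and-curve system in $K$ with a uniformly bounded number of components. By the change of coordinates principle, equivalently cocompactness of the action of $\Mod(K,\partial K)$ on the arc-and-curve complex of $K$, there are only finitely many $G$-orbits of isotopy classes (rel $\partial K$) of such systems; if $K$ is disconnected one argues componentwise, using $G = \prod_i \Mod(K_i,\partial K_i)$ and $i_K = \sum_i i_{K_i}$.

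It remains to see that, after fixing one of finitely many representatives of $\underline a$, the residual transversal data lies in finitely many orbits of $\stab_G(\underline a)$. This data is the tuple $(t_j)$ with $t_j = \trans(\sigma,a_j) \subset \cc(a_j)$, living in $\prod_j \cc(a_j)$. The key structural point is that $\base\omega$ is a multicurve, so each base curve $a_j$ of $\sigma$ is disjoint from every other base curve and from every arc of $\alpha$; consequently the Dehn twist $T_{a_j}$ fixes $\underline a$ and so lies in $\stab_G(\underline a)$. Since $a_j$ is disjoint from $a_k$ for $k \neq j$, the twist $T_{a_j}$ acts on $\cc(a_j)$ as a nontrivial translation and trivially on each $\cc(a_k)$ with $k \neq j$; thus $\langle T_{a_1},\dots\rangle$ acts cocompactly on $\prod_j \cc(a_j)$, leaving finitely many classes of transversal tuples. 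Combining with the previous paragraph gives $\#(A_M/G) < \infty$.

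The main obstacle is this last step: controlling the transversals without disturbing the underlying system. The disjointness of the base curves is precisely what makes the relevant twists available in $\stab_G(\underline a)$ and makes their action on the product of annular graphs diagonal, and I expect verifying that boundary-parallel base curves (whose twists are boundary Dehn twists) are handled correctly by working in $\Mod(K,\partial K)$ rather than $\Mod(K)$ to be the one place requiring care.
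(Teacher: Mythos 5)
Your reduction to Remark~\ref{rmk:iota_surject} is the same first move as the paper's, but your central finiteness assertion --- that $A_M$ is a cofinite $G$-set --- is false as stated, and it fails exactly at the boundary issues you postpone to the last sentence. Every element of $G=\Mod(K,\partial K)$ fixes $\partial K$ pointwise, and your objects are taken up to isotopy rel $\partial K$; hence both the group action and the allowed isotopies preserve the finite set of arc endpoints on $\partial K$, so $A_M/G$ surjects onto the uncountable set of endpoint configurations and is nowhere near finite. The change of coordinates principle you invoke is a statement about $\PMod(K)$ acting on endpoint-free (proper) isotopy classes, not about $\Mod(K,\partial K)$ acting on rel-$\partial K$ classes. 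Your transversal step has the same defect: when $a_j$ is peripheral in $K$, the twist $T_{a_j}$ is a boundary twist, which fixes the endpoint-free class of $\underline{a}$ but does \emph{not} fix it rel $\partial K$ whenever arcs of $\alpha$ end on the corresponding boundary component --- it drags those strands --- so it does not lie in $\stab_G(\underline{a})$ in the sense your cocompactness argument needs, and its powers change $i_K(\,\cdot\,,\omega'|_K)$ by an amount growing with the twisting power. Working in $\Mod(K,\partial K)$ rather than $\Mod(K)$ does not cure this; it is the source of the problem.

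The paper's proof is organized around precisely this difficulty. It coarsens the data to a set $\mathscr{S}$ of classes of $\eta|_K$ up to \emph{endpoint-free} isotopy, forgetting transversals on components peripheral in $K$; on $\mathscr{S}$ the action of $\PMod(K)$ (hence of $\Mod(K,\partial K)$ via $\Mod(K,\partial K)\twoheadrightarrow\PMod(K)$) really is cofinite, and Remark~\ref{rmk:iota_surject} yields $\psi$ with $i_K([\psi\omega|_K],[\omega'|_K])\le D'$, where $i_K$ on classes is the minimum over representatives. It then bridges back to the fixed rel-$\partial K$ representatives: if $[\eta|_K]=[\eta'|_K]$, there is a boundary multitwist $\tau\in\Mod(K,\partial K)$ with $i_K(\tau\eta|_K,\gamma|_K)\le i_K(\eta'|_K,\gamma|_K)+D''$ for all $\gamma|_K$, where $D''$ depends only on $M$ because $i(\omega,\partial K)\le M$ bounds the number of strands meeting each boundary component; the claim follows with $D=D'+2D''$. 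This multitwist-plus-additive-error correction is the real content of the proof and is exactly what your proposal lacks: to repair it, you would have to coarsen $A_M$ as above to obtain a true cofiniteness statement, and then carry out such a correction to convert closeness of coarse classes into closeness of the given rel-$\partial K$ data after applying a single $\phi\in\Mod(K,\partial K)$.
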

  
\begin{proof}
  Let $\mathscr{S}$ be the set of equivalence classes of pairs
  $\eta|_K$ for some choice of representative of $\eta \in V(\M)$
  with $i(\eta,\partial K) \leq M$,
  up to (endpoint free) isotopy and ignoring
  transversals on components peripheral in $K$.  $\PMod(K)$ acts
  cofinitely on $\mathscr{S}$ and preserves 
  \[i_{K}([\eta|_K],[\eta'|_K]) \dfn \min_{\rho|_K \in [\eta|_K], 
    \rho'|_K \in [\eta'|_K]}i_{K}(\rho|_K,\rho'|_K)
  \]
  hence likewise does
  $\Mod(K,\partial K) \twoheadrightarrow \PMod(K)$.  By
  Remark~\ref{rmk:iota_surject}, there exists $D'$ independent of
  $\omega|_K, \omega'|_K$ and $\psi \in
  \Mod(K,\partial K)$ such that
  $i_{K}([\psi\omega|_K],[\omega'|_K]) \leq D'$. However, we
  observe that if $[\eta|_K] = [\eta'|_K]$, then there exists $D''$
  depending only on $M$ and some
  boundary multitwist $\tau\in\Mod(K,\partial K)$ such that 
  $i_{K}(\tau\eta|_K,\gamma|_K) \leq i_{K}(\eta'|_K,\gamma|_K) 
  + D''$ for all $\gamma|_K$, whence the
  claim follows with $D = D' + 2D''$.
\end{proof}

\begin{claim}\label{claim:ikj}
  Let $\eta$ be a multicurve on $\Sigma$ 
   and $K_l \subset \Sigma$ pairwise disjoint subsurfaces
   partitioning $\Sigma$ whose boundary curves lie in $\eta$.
 There exists $C \geq 0$ such that, for any markings
 $\omega, \omega'$ with $i(\omega,\eta),i(\omega',\eta) \leq M$,  
  $i(\omega,\omega') \leq \sum_l
  i_{K_l}(\omega|_{K_l},\omega'|_{K_l})+ C$.
\end{claim}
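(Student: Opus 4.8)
The plan is to expand $i(\omega,\omega')$ according to Definition~\ref{def:geom_int} and match each summand against the corresponding summand of $\sum_l i_{K_l}(\omega|_{K_l},\omega'|_{K_l})$, allowing only a uniformly bounded discrepancy, which I expect to be concentrated along $\eta$. Throughout I would fix representatives of $\base\omega$, $\base\omega'$, and $\eta$ in pairwise minimal position, so that every transverse intersection of $\base\omega$ with $\base\omega'$ lies in the interior of a unique piece $K_l$; since $i(\omega,\eta),i(\omega',\eta)\le M$, each of $\base\omega,\base\omega'$ meets $\eta$ in at most $M$ points, so only boundedly many base curves cross $\eta$ and the rest are isotopic into the interior of a single piece.

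First, the base-curve term. Minimal position with $\eta$ determines each $\omega|_{K_l}=(\sigma_l,\alpha_l)$ up to isotopy rel $\partial K_l$, and a bigon in a piece between restrictions would be a global bigon; hence $\sum_l i(\base\sigma_l\cup\alpha_l,\,\base\sigma'_l\cup\alpha'_l)$, which is precisely the sum over $l$ of the geometric part of $i_{K_l}$, is a lower bound for $i(\base\omega,\base\omega')$. For the reverse inequality (the one we need) I would observe that the only crossings of $\base\omega$ with $\base\omega'$ failing to survive the cut—equivalently, the half-bigons one removes to minimize each piece rel $\partial K_l$—involve arcs terminating on $\eta$, of which there are at most $2M$ for each marking; these produce at most a bounded number $C'=C'(M,\Sigma)$ of extra crossings. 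Thus $i(\base\omega,\base\omega')\le\sum_l i^{\mathrm{base}}_{K_l}+C'$.

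Next, the transversal terms $\diam_{\cc(a)}[\pi_a(\base\omega')\cup t]$ together with their symmetric and shared-base analogues. If a base curve $a$ carrying a transversal lies in the interior of some $K_l$, then the annulus about $a$ lies in $K_l$, so $\pi_a(\base\omega')=\pi_a(\base\sigma'_l)\cup\pi_a(\alpha'_l)$; since $t\ne\varnothing$ and $\cc(a)$ is a quasi-line, the triangle-type inequality $\diam[\pi_a(\base\sigma'_l)\cup\pi_a(\alpha'_l)\cup t]\le\diam[\pi_a(\base\sigma'_l)\cup t]+\diam[\pi_a(\alpha'_l)\cup t]+O(1)$ bounds this term by the transversal summands of $i(\sigma_l,\sigma'_l)$ and $i(\sigma_l,\alpha'_l)$ up to an additive constant, and as there are at most $\xi(\Sigma)$ such curves the total interior error is bounded. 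If instead $a$ crosses $\eta$, then $\pi_a(\eta)\ne\varnothing$ and $\diam_{\cc(a)}[\pi_a(\eta)\cup t]\le M$ by $i(\omega,\eta)\le M$, while $i(\base\omega',\eta)\le M$ forces $\pi_a(\base\omega')$ within bounded distance of $\pi_a(\eta)$ (bounded intersection implies bounded subsurface projection); chaining through the nonempty set $\pi_a(\eta)$ bounds the term uniformly.

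Finally, the remaining contributions are those associated to annuli whose core is a component of $\eta$: transversals carried by $\omega$ or $\omega'$ on such a curve, together with the pure annular projections $\diam_{\cc(c)}[\pi_c(\base\omega)\cup\pi_c(\base\omega')]$. The latter is at most $2M+O(1)$ by hypothesis. The former I expect to be the main obstacle: a transversal on a curve isotopic to some $c\in\eta$ is constrained neither by $i(\omega,\eta)\le M$ nor by the complementary pieces, so one must account for it by taking the partition to include the annular neighborhoods of the components of $\eta$ (whose boundaries again lie in $\eta$), whereupon $\pi_c$ identifies this contribution, up to an additive constant, with the annular summand $i_{A_c}(\omega|_{A_c},\omega'|_{A_c})$. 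Summing the base term, the bounded interior and crossing errors, and these annular terms then yields $i(\omega,\omega')\le\sum_l i_{K_l}(\omega|_{K_l},\omega'|_{K_l})+C$ with $C=C(M,\Sigma)$. The crux throughout is that every discrepancy between the global intersection number and its piecewise measurement is localized at $\eta$, and is therefore controlled by the uniform bound $M$ on $i(\omega,\eta)$ and $i(\omega',\eta)$ together with the finiteness of $\pi_0(\eta)$.
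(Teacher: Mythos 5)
Your proposal is correct and takes essentially the same route as the paper: expand $i(\omega,\omega')$ via Definition~\ref{def:geom_int}, absorb the base-curve and interior-transversal terms into the piecewise quantities $i_{K_l}$, and bound the transversal cross-terms of base curves crossing $\eta$ by chaining through the nonempty projection $\pi_a(\eta)$ using $i(\omega,\eta),i(\omega',\eta)\le M$ together with the fact that bounded intersection with $\eta$ bounds $\diam_{\cc(a)}(\pi_a(\eta)\cup\pi_a(\base\omega'))$ --- exactly the paper's argument. Your two deviations are cosmetic: the needed direction of the base-curve comparison is immediate with no error term (glue piecewise-minimal representatives and use that geometric intersection number is a minimum, so no half-bigon count is required), and your patch of adjoining annular neighborhoods of $\eta$ to handle transversals on $\eta$-parallel base curves is a real subtlety that the paper treats implicitly by fixing restriction representatives (so that a shared $\eta$-parallel component of $\omega$ and $\omega'$ lands in a common piece), and harmlessly so, since in the claim's actual application in Lemma~\ref{lem:marking_retract} the partition already contains the annular pieces you add.
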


\begin{proof}
  Let $\gamma = \{(a_j,t_j)\}, \gamma'=\{(a'_k,t'_k)\}$ be 
  the maximal submarkings of $\omega,
  \omega'$ respectively whose base curves all
  intersect $\eta$.  Then by our
  definition of $i_{K_l}$, we have that 
    \begin{align*}
      i(\omega,\omega') \leq \sum_l i_{K_l}(\omega|_{K_l},
      \omega'|_{K_l}) 
                        &+ \sum_{(a_j,t_j) \in \gamma} 
                        \diam_{\cc (a_j)}(t_j\cup \pi_{a_j}
                        (\base \omega')) \\
                        &+ \sum_{(a'_k,t'_k) \in \gamma'} 
                        \diam_{\cc (a'_k)}(t'_k\cup \pi_{a'_k}
                        (\base \omega)) \\
                        &+ \sum_{a_j = a'_k}
                        \diam_{\cc (a_j)}(t_j\cup
                        t'_k)\:.
    \end{align*}  
  It suffices to show that the (at most $3\xi(\Sigma)$) 
  transversal terms are uniformly bounded.
  Suppose $(a_j,t_j) \in \gamma$.  $a_j$ intersects $\eta$,
  hence $\pi_{a_j} \eta \neq \varnothing$.  
  Since $i(\omega,\eta), i(\omega',\eta) \leq M$,
  $\diam_{\cc(a_j)}(t_j \cup \pi_{a_j} \eta)
  \leq M$ and 
  $\diam_{\cc(a_j)}(\pi_{a_j}\eta \cup \pi_{a_j}(\base\omega'))$
  is uniformly bounded,  
  hence $\diam_{\cc(a_j)}(t_j \cup \pi_{a_j}(\base \omega'))$ is
  uniformly bounded.  Analogous arguments apply for $(a'_k,t'_k)
  \in \gamma'$ and when $a_j = a'_k$.
  \end{proof}

\begin{proof}[Proof of Lemma~\ref{lem:marking_retract}]
  Fix a representative of $\mu$ and let $N$ be an open regular
  neighborhood, and let $K_j$ denote 
  the complementary components of $N$
  along with the components of $\overline N$. 
  Let $K^j \dfn \Sigma \setminus \mathring K_j$.
  Fix representatives of
  $\nu,\nu'$ in minimal position with $\partial K_j$ for all $j$;
  since $\nu,\nu' \in E_\mu$, the number of arc components in $\nu
  \cap K_j$ and $\nu' \cap K_j$ is at most $M$, hence 
  whenever $K_j \cong \Sigma_0^3$ 
  we may assume that
  $i_{K_j}(\nu|_{K_j},\nu'_{K_j})$ is uniformly bounded up to
  isotoping intersections into surrounding annuli. 
  Let $\nu_0 = \nu$.  For each $K_j$, we will construct 
  $\nu_j \in V(\M)$ such that $\nu_j$ is
  identical to $\nu_{j-1}$ except in $\mathring K_j$ 
  (hence $i(\nu_j,\base \mu) \leq M$) 
  and $d_\M(\nu_j,\nu_{j-1})$ and 
  $i_{K_j}(\nu_j|_{K_j},\nu'|_{K_j})$ are uniformly bounded.
  The lemma then follows from Claim~\ref{claim:ikj}.  

  We construct
  $\nu_j$ inductively: assume $\nu_{j-1}$ exists and suppose that
  $K_j \not\in \xc$,
  hence $K_j \not\in \X$ since $K_j$ is connected.  
  If $K_j \cong \Sigma_0^3$ then it suffices to
  let $\nu_j = \nu_{j-1}$.  Else, there exists $\omega \in
  V(\M)$ disjoint from $K_j$.  Applying Claim~\ref{claim:phi_ikj},
  up to translation by $\Mod(K^j,\partial K^j)$ we may assume that 
  $i_{K^j}(\omega|_{K^j},\nu_{j-1}|_{K^j}) \leq D$; likewise, choose
  $\phi \in \Mod(K_j,\partial K_j)$ such that
  $i_{K_j}(\phi\nu_{j-1}|_{K_j},\nu'|_{K_j}) \leq D$ and, extending 
  $\phi$ by identity, let $\nu_j = \phi\nu_{j-1}$.
  Then $i_{K_j}(\nu_j|_{K_j},\nu'|_{K_j}) \leq D$.  
  $\omega$ is disjoint  
  from $K_j$, hence $\omega|_{K_j} = \varnothing$ and
  $i_{K_j}(\nu_{j-1}|_{K_j},\omega|_{K_j}) = 0$ and by
  Claim~\ref{claim:ikj}, $i(\nu_{j-1},\omega) \leq D + C$. 
  $\nu_j, \nu_{j-1}$ are identical outside of $K_j$, thus likewise
  $i(\nu_j,\omega) \leq D + C$.  Hence
  $d_\M(\nu_j,\nu_{j-1}) \leq d_\M(\nu_j,\omega) +
  d_\M(\omega,\nu_{j-1}) \leq 2 f_\M(D + C)$.

  If $K_j \in \xc$, then $K_j$ is an annulus; let 
  $a_j \in \base \mu$ be its core
  curve. Since $K_j$ is a witness,
  $\trans(\mu,a_k) \neq \varnothing$.
  $i(\mu,\nu'), i(\mu,\nu_{j-1}) \leq M$ then implies that
  $i_{K_j}(\nu_{j-1}|_{K_j},\nu'|_{K_j})$ is uniformly bounded. 
  Let $\nu_j = \nu_{j-1}$.
\end{proof}

\begin{remark}
The universal map $\iota : \M \to \lx_\X$ is also a
$\PMod(\Sigma)$-equivariant quasi-isometry.  
In particular, the universal map $\iota':\lx_\X \to \lx_\xc$ is a
$\PMod(\Sigma)$-equivariant quasi-isometry by the above
 and $\hat \iota : \M
\to \lx_\xc$ factors through $\iota$ and $\iota'$. \sqed
\end{remark}

\subsubsection{Witness subsurface projections are
Lipschitz}\label{sec:lip_proj} 

We consider $\pi_W : \lx_\xc \to 2^{\cc W}$ for $W \in \xc$;
the arguments for $\lx_\X$ are identical.
It suffices that for $(\mu,\nu) \in E(\lx_\xc)$ and $W \in
\xc$, $d_W(\mu,\nu) = 
\diam_{\cc W}(\pi_W(\mu)\cup\pi_W(\nu))$ is uniformly bounded.
If $\mu,\nu$ differ  
by the addition of a component or a transversal, 
then without loss of generality $\pi_W(\mu) \subset
\pi_W(\nu)$ and $d_W(\mu,\nu) = 
\diam_{\cc W}(\pi_W(\nu)) \leq 2$. 
Otherwise, $\mu,\nu$ differ by an elementary move. 
Since $\mu,\nu$ are locally complete and clean, we may 
apply the proof of Lemma~2.5 in \cite{MMII}:

\begin{proposition}[Masur-Minsky]\label{prop:pi_lip}
  Suppose $\mu,\nu \in V(\lx_\xc)$ and $\nu$ is obtained from $\mu$
  by an elementary move.  Then for any $W \in \xc$, $d_W(\mu,\nu)
  \leq 4$. 
\end{proposition}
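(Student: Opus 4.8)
The plan is to reduce the statement directly to the Masur–Minsky result \cite[Lem.~2.5]{MMII} by exploiting the fact that a locally clean marking is, by Remark~\ref{rmk:loc_clean}, exactly a complete clean marking when restricted to the maximal subsurface $\Sigma_0$ intersecting only components with non-empty transversals. An elementary move—twist or flip—on $\mu$ only ever alters a single pair $(a,\pi_a b)$ whose base curve $a$ lies in $\Sigma_0$, so the move is supported (as a modification of markings) inside $\Sigma_0$, where $\mu|_{\Sigma_0}$ and $\nu|_{\Sigma_0}$ are genuine complete clean markings and the Masur–Minsky bound applies verbatim. The remainder of the base multicurve is unchanged by the move.

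First I would fix the witness $W \in \xc$ and split into cases according to how $W$ sits relative to the modified curve $a$ and its clean transverse curve $b$. The content of \cite[Lem.~2.5]{MMII} is that for complete clean markings differing by an elementary move, subsurface projections to any essential $W$ move by at most $4$; the proof there treats precisely the three relevant cases—$W$ an annulus about $a$ (where the transversal changes by a bounded twist), $W$ equal to the complexity-$1$ subsurface $F$ filled by $a\cup b$, and a generic $W$ disjoint from or crossing $a$. Since $\pi_W(\mu) = \pi_W(\base\mu)$ for non-annular $W$ and $\pi_W(\mu) = \trans(\mu,\alpha)$ for $W$ annular with core $\alpha \in \base\mu$, and since all components of $\base\mu$ except possibly $a$ are unaffected, the projection $\pi_W(\mu)$ differs from $\pi_W(\nu)$ only through the single changed component. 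In each case the Masur–Minsky estimate gives $d_W(\mu,\nu) \leq 4$.

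The only genuinely new point over the complete-marking setting is that here $\base\mu$ need not fill $\Sigma$, so $W$ may be disjoint from $\base\mu$ entirely; but then $W$ is not a witness, contradicting $W \in \xc$ and the fact that vertices of $\lx_\xc$ meet every surface in $\xc$. Thus every relevant $W$ either crosses $\base\mu$ or is an annulus whose core carries a non-empty transversal, which are exactly the configurations the Masur–Minsky proof handles. I would therefore note explicitly that for $W \in \xc$ one has $\pi_W(\mu),\pi_W(\nu) \neq \varnothing$ by Remark~\ref{rmk:coarse_ss_proj}, so the projection distance is well-defined, and that the local completeness and cleanness guaranteed by membership in $V(\lx_\xc)$ is precisely what licenses the invocation of the Masur–Minsky argument.

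**The main obstacle**, and the reason the statement is credited to Masur–Minsky rather than proved afresh, is verifying that their Lemma~2.5 proof is insensitive to the global incompleteness of the marking—that each of its case analyses is genuinely local around the changed component and uses only that $\mu$ restricts to a complete clean marking on the filled subsurface $F$ of complexity $1$. Once that locality is confirmed, no recomputation is needed; the subtlety is entirely in checking that the hypotheses of the cited lemma are met by our partial markings, which Remark~\ref{rmk:loc_clean} and the definition of $\lx_\xc$ supply. I expect no new estimates and would simply cite \cite[Lem.~2.5]{MMII} for the numerical bound.
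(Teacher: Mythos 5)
Your proposal is correct and matches the paper's argument, which consists precisely of the observation that $\mu,\nu$ being locally complete and clean licenses running the proof of \cite[Lem.~2.5]{MMII} verbatim; the paper offers no further detail beyond this citation. Your additional checks---that the move is supported where the marking restricts to a complete clean marking, and that $W \in \xc$ forces $\pi_W(\mu),\pi_W(\nu) \neq \varnothing$ via the witness definition---are exactly the locality verifications the paper leaves implicit.
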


We may now prove that the map $[\M] \mapsto \xc^\M$ from
Theorem~\ref{thm:qi_marking} is well defined.  Let $\xc = \xc^\M$.
Since $V(\M) \subset
V(\lx_\xc)$, the extension $\hat \iota : \M \to \lx_\xc$
preserves subsurface projection; since $\hat \iota$ is a 
quasi-isometry, witness subsurface projections $\pi_W : \M \to
2^{\cc W}$, $W \in \xc$ are likewise Lipschitz.
Suppose that $\M, \M'$ have distinct connected 
witness sets $\xc, \xc'$
respectively, and let $W \in \xc' \setminus \xc$.  Then there exists
$\omega \in V(\M)$ disjoint from $W$.  Since $W$ is connected, we
may choose a loxodromic element 
$\varphi \in \Mod(W,\partial W)$ for $\cc W$, any extension of 
which acts on $\M'$ with
non-zero translation length since $\pi_W : \M' \to 2^{\cc
W}$ is Lipschitz. Let $\tilde\varphi \in \PMod(\Sigma)$ 
be an extension by identity.
Then $\tilde \varphi$ fixes $\omega \in \M$, hence $\M, \M'$ are not
$\PMod(\Sigma)$-equivariantly quasi-isometric. \sqed

\begin{remark}\label{rmk:mc_distort}
  The above implies that $\mathcal{MC}(\Sigma)^{(1)}$, while a
  $2\xi(\Sigma)$-coarsely dense full subcomplex of $\lx_\X$ by
  Lemma~\ref{lem:marking_completion}, 
  is in general significantly distorted.
\end{remark}

\subsection{Hierarchical structure of $(\mathcal{L}_{\mathscr{X}},
\mathscr{X})$}

We endow $\lx_\X$ with the usual hierarchical structure via witness
subsurface projections $\{\pi_W: \lx_\X \to 2^{\cc W}
\}_{W \in \X}$ and the following relations
on $\X$:

\begin{itemize}
  \item $U \sqsubseteq V$ if and only if 
    $U \subset V$ up to isotopy; and
  \item $U \perp V$ if and only if $U, V$ are disjoint up to
    isotopy.
\end{itemize}
Let $U \pitchfork V$ if neither of the above hold.  
For $W \in \X$, $\pi_W$ is uniformly quasi-convex: any curve
(potentially adding transversal) may be completed to a marking in
$V(\lx_\X)$, thus 
$\pi_W$ is surjective if $W$ is not an annulus; 
if $W$ is an annulus with core curve
$a$, then any orbit of a Dehn twist about $a$ projects to a 
$1$-coarsely dense subset of $\cc W$.
Hence $\pi_W$ satisfies Axiom 1 by the above, 
Remark~\ref{rmk:coarse_ss_proj}, and
Proposition~\ref{prop:pi_lip}.

If $U \sqsubseteq V$, then let $\rho_U^V
= \pi_U : \cc V \to 2^{\cc U}$, and if in addition 
$U \neq V$, let
$\rho_V^U = \partial_V U$, the non-peripheral boundary of $U$ in
$V$.  If $U \pitchfork V$, let $\rho_U^V =
\pi_U(\partial V)$. 
Axioms 2 and 3 follow immediately from the definition of witness
subsurfaces\footnote{We remark that Axiom 3 may require
disconnected witnesses. Thus $(\lx_\xc,\xc)$ may not be 
hierarchically hyperbolic, despite that the quasi-isometry
$\hat \iota : \lx_\X \to \lx_\xc$ preserves projections over $\xc$.}
and our choice of relations and
associated projections, and
Axioms 4 through 8 follow from the fact that the same hold for
$\mathcal{MC}(\Sigma)^{(1)}$, 
which lies as a $2\xi(\Sigma)$-coarsely
dense subcomplex of $\lx_\X$, and that the $\pi_W$
are uniformly Lipschitz.  
We verify Axiom 9
below.

\begin{definition}\label{def:xtilde}
  Given an admissible witness set $\X$, let the \textit{twist-free
  witness set} $\tilde\X \subset \X$ denote the
  admissible witness set with all annular witnesses removed.  
\end{definition}

\begin{definition}\label{def:kx}
  If $\X$ is an admissible witness set without annuli, let
  $\mathcal{K}_\X$ denote the full subgraph of $\lx_\X$ spanned by
  markings with empty transversals, with
  additional edges $(\mu,\nu)$ corresponding 
  to \textit{twist-free flip moves}:
  $\nu$ is obtained from $\mu$ by replacing a component
  $(a,\varnothing) \mapsto (b,\varnothing)$ such that, if $F$ is the
  subsurface filled by $a,b$, then $F$ is connected, $\partial F
  \subset \base \mu \cup \partial \Sigma$, and $d_{\cc F}(a,b) = 1$.
\end{definition}

\begin{lemma}\label{lem:axiom9}
 For any $K \geq 0$, there exists $K' \geq 0$ such that for any
 $\mu, \nu \in \lx_\X$, if $d_W(\mu,\nu) \leq K$ for  all $W \in
 \X$, then $d_{\lx_\X}(\mu,\nu) \leq K'$.
\end{lemma}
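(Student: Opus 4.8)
The plan is to split the witness set as $\X = \tilde\X \sqcup \{\text{annular witnesses}\}$ (Definition~\ref{def:xtilde}) and control the two contributions in turn: first align the base multicurves using only the non-annular witnesses, invoking Vokes' uniqueness axiom for the twist-free graph $\mathcal{K}_{\tilde\X}$ (Definition~\ref{def:kx}), and then align the transversals using the annular witnesses by a controlled twisting argument. One checks that $\mathcal{K}_{\tilde\X}$ is an admissible twist-free multicurve graph whose witness set is precisely $\tilde\X$, so by Theorem~\ref{thm:tf_hhs} it is hierarchically hyperbolic and in particular satisfies the uniqueness axiom over $\tilde\X$.

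For the first step, note that $\base\mu$ and $\base\nu$ are vertices of $\mathcal{K}_{\tilde\X}$, since $\mu,\nu$ meet every $W \in \X \supseteq \tilde\X$ and meeting a non-annular witness is a condition on the base alone. For such $W$ we have $d_W(\mu,\nu) = d_W(\base\mu,\base\nu)$ by Definition~\ref{def:marking_proj}, so the hypothesis gives $d_W(\base\mu,\base\nu) \le K$ for every $W \in \tilde\X$; Vokes' uniqueness then yields $d_{\mathcal{K}_{\tilde\X}}(\base\mu,\base\nu) \le K_1$ for some $K_1 = K_1(K)$. I would then lift a $\mathcal{K}_{\tilde\X}$-geodesic to $\lx_\X$, observing that each curve addition or removal, and each twist-free flip, corresponds to a uniformly bounded number of moves in $\lx_\X$, producing a marking $\mu'$ with $\base\mu' = \base\nu$ at bounded $\lx_\X$-distance from $\mu$ and (by the control discussed below) with $d_A(\mu',\nu) \le K'$ for every annular witness $A$.

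For the second step, $\mu'$ and $\nu$ share the base $b = \base\nu$, and it remains to reconcile transversals. For a component $a \in b$ whose annulus $A$ is a witness, both markings carry a transversal on $a$ and $\diam_{\cc A}(\trans(\mu',a) \cup \trans(\nu,a)) = d_A(\mu',\nu) \le K'$; in the annular graph this bounds the number of Dehn twists separating the two transversals, so a uniformly bounded sequence of twist moves about $a$ aligns them, up to the bounded ambiguity of Lemma~\ref{lem:marking_compat}. Twists about distinct components of $b$ have disjoint support and may be performed independently, and a transversal on any non-witness core is irrelevant and may simply be removed from both markings; since $b$ has at most $\xi(\Sigma)$ components, this carries $\mu'$ to $\nu$ in bounded $\lx_\X$-distance, which together with the first step establishes the lemma.

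The main obstacle is the lifting in the first step: a curve removal in $\mathcal{K}_{\tilde\X}$ need not preserve membership in $V(\lx_\X)$, because deleting a base curve can leave an annular witness met only through a core that now carries no transversal, and local completeness constrains where a replacement transversal may be attached. Controlling this requires ordering the lifted moves so that a transversal is installed on any such core before the protecting curve is removed --- adding auxiliary base curves, as in Lemma~\ref{lem:marking_completion}, to reduce the relevant complementary component to complexity $1$ --- while fixing these transversals consistently using the annular projections of $\mu$ so that the extra twisting introduced stays uniformly bounded (each constituent move changes any $d_W$ by at most $4$ by Proposition~\ref{prop:pi_lip}). I would treat this bookkeeping as the technical heart of the argument, with Lemma~\ref{lem:di_bound} furnishing the final distance bound once the geometric intersection numbers along the lifted path are controlled.
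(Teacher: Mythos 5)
Your overall frame does match the paper's: both reduce to Vokes' bounded-path statement for the twist-free graph $\mathcal{K}_{\tilde\X}$ (the paper invokes \cite[Prop.~3.6]{vokes} directly rather than routing through Theorem~\ref{thm:tf_hhs}), lift the resulting path of base multicurves to clean markings in $\lx_\X$, and then worry about twisting at annular witness cores, finishing with a per-step estimate and transversal alignment much like the paper's first Claim (which gives the bound $(D+3)(\xi(\Sigma)+1)$ per edge). One preliminary correction: the liftability issue you flag is not where the difficulty lies, and your proposed fix via auxiliary curves as in Lemma~\ref{lem:marking_completion} is unnecessary. Since $\X$ is closed under enlargement, for any vertex $m$ of $\mathcal{K}_{\tilde\X}$ and annular witness with core $a$ not crossed by $m$, one has $a \in m$ and every complementary component adjacent to $a$ is a pair of pants (otherwise its union with the annulus would be a non-annular witness missed by $m$); hence a clean transversal can always be installed and local completeness holds automatically. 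This observation opens the paper's proof.

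The genuine gap is in your lifting step --- exactly where you locate the ``technical heart'' --- and your proposed mechanism does not close it. You assert that each $\mathcal{K}_{\tilde\X}$-edge lifts to a uniformly bounded number of $\lx_\X$-moves, with transversals ``fixed consistently using the annular projections of $\mu$.'' But the flip targets are dictated by the path, not chosen by you: to lift a flip $c \mapsto b$ at a witness core $c$, the transversal at $c$ must first be carried to $\pi_c(b)$ by twist moves, at a cost of roughly $d_{\cc(c)}(\trans(\omega,c),\pi_c b)$ twist moves, and nothing bounds this quantity. Concretely, a bounded-length path may flip $y \mapsto c$ and later flip $c \mapsto \tau_c^N y$; both are legitimate twist-free flip edges for every $N$ (the filled surface and intersection number are unchanged), the multicurve path is blind to $N$, and Vokes' result gives no control over which bounded-length path you receive. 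Anchoring to $\pi_c(\mu)$ cannot help, since while $c$ lies in the base the intermediate multicurves project trivially to $\cc(c)$. The paper's resolution is to modify the path rather than the transversal choices: it applies twist powers $\tau_a^{\rho_j}$ to entire suffixes of the lifted sequence, using two ingredients absent from your sketch: (i) since $\cc(a)$ is $(1,1)$-quasi-isometric to $\Z$ and $d_a(\mu,\nu)\leq K$, the signed twisting discrepancies telescope, so by \eqref{eq:z_bound} the total correction is at most $K + c_0K''$; and (ii) twisting about $a$ does not change projection distances at other base curves (Claim~\ref{claim:projpreserved}), so a correction at one core creates no new bad pairs at another. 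A minimal-counterexample count then forces all per-step annular distances at witness cores below $K + c_0K'' + 2$, after which your per-step and alignment estimates apply. Without the path-modification idea and these two lemmas, your bookkeeping paragraph cannot be completed as stated.
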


\begin{proof}
  We regard $\mathcal{K}_{\tilde \X}$ as a (twist-free) multicurve
  graph.  For any multicurve $m \in 
  V(\mathcal{K}_{\tilde \X})$, there exists a clean marking 
  $\mu \in V(\lx_\X)$ such that $\base \mu = m$.  In particular 
  every non-annular witness in $\X$ intersects $m$, and if
  $A$ is some annular witness disjoint from $m$ with core curve $a$,
  then $a \in m$ and any complementary
  component adjacent to $a$ is a pair of pants.  Hence we
  may choose a clean transverse curve $b$ for $a$ disjoint from $m
  \setminus \{a\}$.
  Let $\mu$ consist of components $(a,\pi_a b)$ for $a\in m$
  if $a$ is
  parallel to an annular witness, and $(a,\varnothing)$ otherwise.

  By \cite[Prop.~3.6]{vokes}, there exists a path of multicurves 
  $\tilde P\subset \mathcal{K}_{\tilde\X}$ 
  between $\base \mu, \base \nu$ of length $\ell$ 
  at most $K'' = K''(K,\X)$. Let $\tilde P = (m_j)$  
  with $m_1 = \base \mu$ and $m_\ell = \base \nu$.  
  Let $P_0 = (\omega_j) \in V(\lx_\X)$ be a sequence of clean
  markings such that $\omega_1 = \mu, \omega_\ell = \nu$, and
  $\base \omega_j = m_j$, where $\omega_{j \neq 1,\ell}$ is chosen 
  as described above.  
  If $d_{\lx_\X}(\omega_j,\omega_{j+1})$ is uniformly bounded
  indepedent of the path $\tilde P$, then we conclude by the
  triangle inequality.  In fact, it suffices to control projections
  to annular witnesses:

  \begin{claim}
    Let $\omega,\omega' \in V(\lx_\X)$ such that $(\base \omega,
    \base \omega') \in E(\mathcal{K}_{\tilde \X})$, and let
    $D \geq 0$ such that $d_c(\omega,\omega') \leq D$ 
    for any $c \in \base \omega \cup \base \omega'$ parallel to an
    annular witness. Then
    $d_{\lx_\X}(\omega,\omega') \leq (D + 3)(\xi(\Sigma) + 1)$.
  \end{claim}

  \begin{proof}[Proof of claim.]
    Without loss
    of generality assume $|\omega| \leq |\omega'|$, 
    hence $\base \omega'$ is obtained from $\base \omega$ 
    by either adding a component
    $b$ or a twist-free flip $a \mapsto b$.  Let $\omega''$ be 
    obtained from $\omega$ by adding $(b,\trans(\omega',b))$ 
    or replacing 
    $(a,\trans(\omega,a))$ with $(b,\pi_b a)$ and choosing
    a compatible clean marking, as appropriate.  In the first case,
    $d_{\lx_\X}(\omega,\omega'') = 1$, and if $a$ is not parallel
    to an annular witness, then in the second case
    $d_{\lx_\X}(\omega,\omega'')\leq 3$: remove and replace the
    transversal for $a$ with $\pi_a b$, then flip.  Otherwise, $a$
    is a witness curve and at most $d_a(\omega,\omega')\leq D$ 
    twist moves along $a$ suffice to replace $\trans(\omega,a)$ with
    $\pi_a b$, hence $d_{\lx_\X}(\omega,\omega'')\leq D + 1$. 
    
    We show that $d_{\lx_\X}(\omega'',\omega') \leq (D+3)|\omega''|$
    and conclude.
    $\base \omega'' = \base \omega'$, hence to obtain $\omega'$
    we need only modify
    transversals in $\omega''$ to agree with those in
    $\omega'$.  Let $c \in \base \omega''$.  As above, at
    most $2$ moves suffice if $c$ is not a witness curve and 
    $d_c(\omega'',\omega')$ otherwise; 
    by Lemma~\ref{lem:marking_compat},
    $d_c(\omega'',\omega') \leq d_c(\omega,\omega') + 3 \leq D+3$.
  \end{proof}

  Let $P = (\omega_j)$ be a sequence of markings in
  $\lx_\X$ of length $\ell \leq K''$ such that $\omega_1 = \mu$,
  $\omega_\ell = \nu$, $(\base \omega_j)$ is a path in
  $\mathcal{K}_{\tilde \X}$, and $P$ is minimal in the following
  sense: $P$ has the fewest number of pairs
  $(j,a)$ for which $1 \leq j < \ell$, $a \in \base \omega_j \cup
  \base \omega_{j+1}$ is
  parallel to an annular witness, 
  and $d_a(\omega_j,\omega_{j+1}) > K + c_0K'' + 2$, where $c_0$ is
  a universal constant defined below.  $P$
  exists by the existence of $P_0$.  We show that if 
  $P$ has such a pair $(j,a)$ then there exists a more minimal
  sequence $P'$,
  and conclude by contradiction and the above.  Fix $a$ in such a
  pair.
  Let $m_j = \base \omega_j$.  If $a \notin m_j \cup m_{j+1}$, then 
  $d_{a}(\omega_j,\omega_{j+1}) \leq 4$ (see \textit{e.g.}\ the
  proof of \cite[Lem.~2.5]{MMII}).
  For each $j <\ell$ for which $a \in m_j \cup m_{j+1}$, 
  choose $\sigma_{j} \in \Z$ 
  such that $d_a(\omega_j,\tau_{a}^{\sigma_{j}} \omega_{j+1}) 
  \leq 1$, 
  where $\tau_a$ is a Dehn twist about $a$; let $J < \ell$ be the
  last such index. Set $\sigma_{j} = 0$ 
  otherwise. 
  $\cc (a)$ is $\Z$-equivariantly $(1,1)$-quasi-isometric
  to $\Z$ and $d_{a}(\mu,\nu) = d_a(\omega_1,\omega_{\ell})
  \leq K$, hence there exists a universal constant $c_0 > 4$ 
  such that
    \begin{equation}\label{eq:z_bound}
      \left|\sum_{j=1}^J \sigma_{j}\right| 
    \leq K + c_0(\ell-1) + 1\leq K + 
    c_0K''\:.
    \end{equation} 
  
  Let $\rho_j = \sum_{k=1}^{j-1} \sigma_k$ and let 
  $P' = (\omega'_j)$, where $\omega'_j = \tau_a^{\rho_j}\omega_j$ 
  for $j \leq J$, and $\omega'_j = \omega_j$ otherwise; let $m'_j =
  \base \omega'_j$.  We first verify that
  $(m'_j)$ is a path in $\mathcal{K}_{\tilde \X}$.  Since
  $\mathcal{K}_{\tilde \X}$ is preserved by $\tau_a$, for $j < J$ 
  it suffices that 
  $(m_j,\tau_a^{\sigma_j}m_{j+1})$ is an edge in
  $\mathcal{K}_{\tilde \X}$.  For $\sigma_j = 0$, this is immediate.
  Else $a \in m_j \cup m_{j+1}$.  If $a \in m_{j+1}$ then 
  $\tau_a^{\sigma_j}m_{j+1} = m_{j+1}$, and 
  otherwise $a \in m_j$ and
  $m_{j+1}$ is obtained by a twist-free flip $a \mapsto
  b$: we observe that $\tau_a^{\sigma_j} b$ intersects $a$
  minimally in the surface filled by $a,b$. For $j = J$, it suffices
  that $(m_J,\tau_a^{-\rho_J} m_{J+1})$ is an edge in
  $\mathcal{K}_{\tilde\X}$: since $a \in m_J \cup
  m_{J+1}$ by assumption, an identical argument
  applies. 

  We show that $P'$ has
  strictly fewer pairs $(j,a')$ with $a' \in m'_j\cup m'_{j+1}$ a 
  witness
  curve such that 
  $d_{a'}(\omega'_j,\omega'_{j+1}) > K + c_0K'' + 2$.

  \begin{claim}\label{claim:projpreserved} 
    Let $\omega,\omega'$ be clean markings whose respective base
    multicurves $m, m'$ differ by adding a component or a
    (twist-free) flip.
    Let $a \in m \cup m'$ and $a'' \in
    m$ be distinct.  Then $d_{a''}(\omega,\tau_a^q\omega') =
    d_{a''}(\omega,\omega')$ for $q \in \N$.
  \end{claim}

  \begin{proof}[Proof of claim]
If $a'' \in m \cap m'$, then $\pi_{a''}(\omega) =
  \trans(\omega,a'')$ and $\pi_{a''}(\omega') =
    \trans(\omega',a'')$, projections of clean
    transverse curves $b,c$ respectively.  Since $a \in m \cup
    m'$, $a \cap a'' = \varnothing$ and 
    at most one of $b,c$ intersects $a$, hence $\tau_a$ lifts to an
    action on $\cc(a'')$ that
    fixes either $\pi_{a''}(\omega)$ or $\pi_{a''}(\omega')$:
    we obtain $d_{a''}(\omega,\tau_a^q\omega') = 
    d_{a''}(\tau_a^q\omega,\tau_a^{q}\omega')
    = d_{a''}(\omega,\omega')$.  If instead 
    $a'' \in m \setminus m'$, then $a \in m'$, since
    otherwise two distinct curves lie in $m \setminus m'$ and
    $m, m'$ do not differ by adding a component or a flip.
    Then $\pi_{a''}(\omega') = \pi_{a''}(m')$ is preserved by 
    $\tau_a$ and likewise 
    $d_{a''}(\omega,\tau_a^{q}\omega') =
    d_{a''}(\omega,\omega')$. 
  \end{proof}

  Observe that the maps $a' \in m_j' \mapsto \tau_a^{-\rho_j}a'$ 
  and $a' \in m_{j+1}' \mapsto \tau_a^{-\rho_{j+1}}a'$ 
  induce a bijection from $m_j' \cup m_{j+1}'$ to $m_j
  \cup m_{j+1}$ preserving witness curves.  
  We first assume that $j < J$, and suppose that $a' \in m'_j$ is a
  witness curve. Let
  $a'' = \tau_a^{-\rho_{j}}a' \in m_j$.  As
  above, by translating by $\tau_a^{-\rho_{j}}$
  we observe that 
  $d_{a'}(\omega'_j,\omega'_{j+1}) =
  d_{a''}(\omega_j,\tau_a^{\sigma_j} \omega_{j+1})$. If $\sigma_j =
  0$ then $d_{a''}(\omega_j,\tau_a^{\sigma_j} \omega_{j+1}) =
  d_{a''}(\omega_j,\omega_{j+1})$.  
  Suppose $\sigma_j \neq 0$ and thus $a \in m_j \cup m_{j+1}$.  
  For $a \neq
  a''$, Claim~\ref{claim:projpreserved} implies that likewise
  $d_{a''}(\omega_j,\tau_a^{\sigma_j}\omega_{j+1}) =
  d_{a''}(\omega_j,\omega_{j+1})$.   
   An analogous argument applies if $a' \in
  m'_{j+1}$, replacing $a''$ with $\tau_a^{-\rho_{j+1}} a' 
  \in m_{j+1}$ and translating by $\tau_a^{-\rho_{j+1}}$. 
  Finally if $a'' = a$ then $a' = a$, thus 
  by our choice of $\sigma_j$
  $d_{a'}(\omega_j,\tau_a^{\sigma_j}\omega_{j+1}) \leq 1 \leq
  K + c_0K'' + 2$.

  Let $j = J$.  Then $a \in m_J \cup m_{J+1}$
  by our choice of $J$. We apply the arguments above: if $a' \in
  m'_J$, then let $a'' = \tau_a^{-\rho_J}a'\in m_J$ and translate by
  $\tau_a^{-\rho_J}$, else if $a' \in m'_{J+1} = m_{J+1}$, then let
  $a'' =a'$.  Hence if $a'' \neq a$,
  $d_{a'}(\omega_J',\omega_{J+1}') =
  d_{a''}(\omega_J,\omega_{J+1})$.
  If $a'' = a$ then $a' = a$; observe that by \eqref{eq:z_bound}, 
  $|\rho_{J+1}| \leq K + c_0K''$, hence
  $d_a(\omega_{J+1},\tau_a^{\rho_{J+1}}\omega_{J+1}) \leq K + c_0K'' +
  1$.  But $d_a(\omega'_J,\tau_a^{\rho_{J+1}}\omega_{J+1}) =
  d_a(\omega_J,\tau_a^{\sigma_J}\omega_{J+1}) \leq 1$, hence
  $d_{a'}(\omega'_J,\omega'_{J+1}) \leq K + c_0K'' + 2$.
\end{proof}

Hence $(\lx_\X,\X)$ is a hierarchically hyperbolic space with
respect to witness subsurface projections.  However, if
$(X,\mathcal{G})$ is a hierarchically hyperbolic space and $\psi :
X' \to X$ is a quasi-isometry, then $(X',\mathcal{G})$ is likewise
a hierarchically hyperbolic space by precomposing projections with
$\psi$ (this is remarked in \textit{e.g.}\  \cite[\S 1.1.4]{hhsi}).  
Thus by
Section~\ref{sec:qi_marking}, Theorem~\ref{thm:hhsm} follows:
for any $\M$ for which $\X^\M = \X$, the universal 
quasi-isometry $\iota : \M \to \lx_\X$ preserves subsurface
projection, hence 
$(\M,\X)$ is a hierarchically hyperbolic space with
respect to the projections $\pi_W \circ \iota = \pi_W$, $W \in \X$.
\sqed

\section{Hierarchical hyperbolicity of 
multiarc and curve graphs}\label{sec:multiarc}

For an admissible multiarc and curve graph $\mathcal{A}$ on
$\Sigma$, we construct an admissible partial 
marking graph $\M_\A$ on $\Sigma$ with an identical witness
set and a $\PMod(\Sigma)$-equivariant coarse 
quasi-isometry $\zeta: \A \to \M_\A$
that coarsely preserves witness subsurface projection.  Then by
Theorem~\ref{thm:hhsm}, Theorem~\ref{thm:hhsa} follows 
immediately.

\subsection{Constructing the associated marking graph}
Given an arc and curve system $\alpha$, we 
construct a set of corresponding
clean markings $\mu_\alpha$ as follows. Let $\alpha_j$ denote the
connected components of $\alpha$, \textit{i.e.}\  maximal
subsystems such that $\alpha_j \cup \partial \Sigma$ has a single
connected component that is not a boundary. Then $\alpha_j$ 
is either:
\begin{enumerate*}[label={(\roman*)}]
  \item an arc or curve $a_j$, or 
  \item a multiarc $\{a_{j,k}\}_{k=1}^{L_j}$.
\end{enumerate*}
In the former case, let $\base \mu_j = \partial_\Sigma(a)$,
the collection of distinct essential, non-peripheral
boundary components of a regular neighborhood of $a \cup \partial
\Sigma$, and give each component an empty transversal.  
In the latter, let $\base \mu_j = \bigcup_k
\partial_{\Sigma}(\bigcup_{s=1}^k a_{j,s})$ and to each component
$c$ add the transversal $\pi_c (\alpha_j)$. 
Let $F_j$ be the subsurface filled
by $\alpha_j$.  Then $\base \mu_j \setminus \partial F_j\subset
F_j$, $\partial F_j \cap \base \mu_j$ has empty
transversals, and $\mu_j \setminus \partial F_j$ is a complete
marking on $F_j$: $\bigcup_j \mu_j$ is a locally complete marking.  
Let $\mu_\alpha$ be the (finite) collection of all clean markings 
compatible with $\bigcup_j \mu_j$, for every choice of orderings 
of the $a_{j,k}$.  

We define the vertices of $\M_\A$ to be
\[
  V(\M_\A) = \bigcup_{\alpha \in V(\A)}
  \mu_\alpha\:.
\]
Let $(\mu,\nu) \in E(\M_\A)$ if and only if $\mu \in
\mu_\alpha$ and $\nu \in \mu_\beta$ for
$(\alpha,\beta) \in E(\A)$. Finally, define 
$\zeta : V(\A) \to V(\M_\A)$ to be the coarse map
$\alpha \mapsto \mu_\alpha$,
where we observe $\diam(\zeta(\alpha))
\leq 2$ since $\A$ is connected and not a singleton: for
$\beta$ adjacent to $\alpha$ in $\A$,
any $\mu,\mu' \in \zeta(\alpha)$ are adjacent
to $\mu_{\beta}$ in $\M_\A$.  
By our definitions, $\zeta$ is coarsely Lipschitz,
surjective, and since $\mu_\alpha$ is canonical, 
$\PMod(\Sigma)$-equivariant; $\M_\A$ is likewise
preserved under the action of $\PMod(\Sigma)$ and connected.  Let $W
\subset \Sigma$ be an essential, non-pants subsurface.  
For $\alpha = \bigcup_j \alpha_j$ and 
$\mu = \bigcup_j \mu_j \in \mu_\alpha$ as above, $\alpha_j$ 
is filling and $\mu_j\setminus \partial F_j$ is complete on
$F_j$, and $\mu_j \cap \partial F_j$ has empty transversals. Hence
$W$ intersects $\mu$ if and only if it intersects some $F_j$ if and
only if it intersects $\alpha$: the witness set
for $\M_\A$ is identical to that of $\A$.
Finally, by construction $\zeta$ coarsely preserves subsurface
projection to annular witnesses; for non-annular witnesses, 
$\alpha$ and $\mu \in \mu_\alpha$ have uniformly bounded
intersection hence coarsely equal projection.

\subsection{Counting intersections}
It remains to show that if $\mu,\mu'$ are adjacent in $\M_\A$ then
$i(\mu,\mu')$ is uniformly bounded, hence $\M_\A$ is admissible, and
that there exists a Lipschitz coarse retraction for $\zeta$.  If
$\mu,\mu'$ are adjacent then there exists $\alpha,\alpha'$ adjacent
in $\A$ such that $\mu\in\mu_\alpha$ and $\mu' \in \mu_{\alpha'}$.
Since the components of $\mu, \mu'$ are parallel to boundaries and
the components of $\alpha, \alpha'$ respectively,
$i(\base \mu,\base \mu') \leq 4|\mu||\mu'|i(\alpha,\alpha') +
4|\mu||\mu'|(|\alpha| + |\alpha'|) 
\leq 4\xi(\Sigma)^2 (i(\alpha,\alpha')+2\dim \A(\Sigma))$.
Likewise, for $c \in \base \mu$, $d_c(\mu,\mu')$ is uniformly
bounded in terms of $i(\alpha,\alpha')$, and likewise
for $c' \in \base \mu'$: since 
$i(\alpha,\alpha')$ is uniformly bounded, so is $i(\mu,\mu')$.

For $\mu \in V(\M_\A)$, let $E_\mu = \{\alpha \in V(\A) : \mu \in
\mu_\alpha\}$.  By an argument identical to that in
Section~\ref{sec:qi_marking}, $\rho : \mu \mapsto E_\mu$ is a
Lipschitz coarse retraction for $\zeta$ 
if $E_\mu$ has uniformly bounded diameter.  
As usual, it suffices to show that
there exists $D \geq 0$ such that for any $\mu \in V(\M_\A)$ and
$\alpha, \alpha' \in E_\mu$, $i(\alpha,\alpha') \leq D$.
We verify that for any component $a \in \alpha$ and $a' \in
\alpha'$, $i(a,a')$ is bounded uniformly in terms of $\xi(\Sigma)$. 
If $a,a'$ are both
curves, then by construction $a,a' \in \base \mu$
and hence either disjoint or identical. 
If $a$ is an arc and $a'$ a curve,
then $a$ intersects $a'$ at most twice.  More generally,
for any arc $a \in \alpha$, $a$ is contained in a subsurface $F$ for
which $\mu|_F$ is a complete marking on $F$ and $a \setminus \mu$
has at most two components in each component of $F \setminus \mu$.
Up to Dehn twists along components in $\base \mu$, $a$ is thus 
one of finitely many 
arcs, any two of which have at most $8$ intersections in each
component of $F \setminus \mu$. The order of each 
twist is determined up to a uniform constant 
by the transversals in $\mu$.
It follows that any two such arcs have
uniformly bounded intersection number, depending only on $\xi(F)
\leq \xi(\Sigma)$.  Finally, if $a,a'$ are both
arcs then choose $F$ to contain both, whence $i(a,a')$ is uniformly
bounded.
$\zeta$ is a quasi-isometry. \sqed


\section{Acknowledgements}

The author would like to thank Mladen Bestvina for his support and
advice, and Kate Vokes, Jacob Russel, and George Shaji for helpful
discussions.  
The author was supported by NSF award nos.\ 1905720 and 2304774
and no.\ 1840190: \textit{RTG: Algebra, Geometry, and Topology at 
  the University of Utah}.

\bibliographystyle{amsalpha}
\bibliography{pac}

\end{document}